\newtheorem{defn}{Definition}[section]
\newtheorem{thm}[defn]{Theorem}
\newtheorem{lem}[defn]{Lemma}
\newtheorem{prop}[defn]{Proposition}
\newtheorem{cor}[defn]{Corollary}
\newtheorem{ex}[defn]{Example}
\newtheorem{re}[defn]{Remark}
\def\K{{\bf K}}
\def\ad{{{\rm ad}}}
\def\Der{{{\rm Der}}}
\def\Id{{{\rm Id}}}
\def\inner{{{\rm inner}}}
\begin{document}
\title{{\bf Representations and module-extensions of hom 3-Lie algebras}}
\author{\normalsize \bf Yan Liu,  Liangyun Chen, Yao Ma}
\date{{{\small{ School of Mathematics and Statistics,
  Northeast Normal University,\\
   Changchun 130024, China
 }}}} \maketitle
\date{}

 {\bf\begin{center}{Abstract}\end{center}}

In this paper, we study the representations and module-extensions of
hom 3-Lie algebras. We show that a linear map between hom 3-Lie
algebras is a morphism if and only if its graph is a hom 3-Lie
subalgebra and show that the derivations of  a hom 3-Lie algebra is
a Lie algebra.
 Derivation extension of hom 3-Lie algebras are also studied as an
application. Moreover, we introduce the definition of
$T_{\theta}$-extensions and $T^{*}_{\theta}$-extensions of hom 3-Lie
sub-algebras in terms of modules,  provide the necessary and
sufficient conditions for $2k$-dimensional metric hom 3-Lie algebra
to be isomorphic to a $T^{*}_{\theta}$-extensions.

\noindent\textbf{Keywords:} Hom 3-Lie algebra, module-extension, $T^{*}_{\theta}$-extension,
 representation,  deformation.\\
\textbf{2000 Mathematics Subject Classification:} 17B99, 17B30.
\renewcommand{\thefootnote}{\fnsymbol{footnote}}
\footnote[0]{ Corresponding author(L. Chen): chenly640@nenu.edu.cn.}
\footnote[0]{ Supported by  NNSF of China (No. 11171057),  Natural
Science Foundation of  Jilin province (No. 201115006), Scientific
Research Foundation for Returned Scholars
    Ministry of Education of China and the Fundamental Research Funds for the Central Universities. }

\section{Introduction}

3-Lie algebras are special types of n-Lie algebras and have close
relationships with many important fields in mathematics and
mathematical physics[6,7,10]. The structure of 3-Lie algebras is
closely linked to the supersymmetry and gauge symmetry
transformations of the world-volume theory of multiple coincident
M2-branes and is applied to the study of the Bagger-Lambert theory.
Moreover, the $n$-Jacobi identity can be regarded as a generalized
Plucker relation in the physics literature.

A Hom-Lie algebra is a triple $(L,[\cdot,\cdot]_{L},\alpha)$, where
$\alpha$ is a linear self-map, in which the skewsymmetric bracket
satisfies an $\alpha$-twisted variant of the Jacobi identity, called
the Hom-Jacobi identity. When $\alpha$ is the identity map, the
Hom-Jacobi identity reduces to the usual Jacobi identity, and $L$ is
a Lie algebra. The notion of  hom-Lie algebras was introduced by
Hartwig, Larsson and Silvestrov to describe the structures on
certain deformations of the Witt algebra and the Virasoro algebra
[12]. Hom-Lie algebras are also related to deformed vector fields,
the various versions of the Yang-Baxter equations, braid group
representations, and quantum groups [7,18,19].

In this paper we give the definition of hom 3-Lie algebra, and
construct new hom 3-Lie algebras
 from an existing hom 3-Lie algebra $(L, [\cdot, \cdot, \cdot]_L, \alpha)$ by adding $L$-module $A$ to $L$. Specializing to the
 case $A=L^{*}$, the dual space of $L$, we obtain the analogue of $T^{*}$-extension for hom
 3-Lie algebras. We then investigate their relationships to even dimensional metric hom 3-Lie algebras.

The paper is organized as follow. In Section 2 after giving the definition of
  hom 3-Lie algebras,  we show that the direct sum of two  hom 3-Lie algebras
  is still a  hom 3-Lie algebra.  A linear map between  hom 3-Lie algebras  is
  a morphism if and only if its graph is a  hom 3-Lie subalgebra. In section 3
  we study derivations of multiplicative  hom 3-Lie algebras. For any nonnegative
   integer $k$,  we define $\alpha^{k}$-derivations of multiplicative  hom 3-Lie
   algebras. Considering the direct sum of the space of $\alpha$-derivations,
   we prove that it is a Lie algebra (Proposition 3.3). In particular,  any $\alpha$-derivations
   gives rise to a derivation extension of the multiplicative  hom 3-Lie algebras
   $(L, [\cdot, \cdot, \cdot]_L, \alpha)$(Theorem 3.5). In Section 4 we give the
   definition of representations of multiplicative hom 3-Lie algebras. We show that
   one can obtain the semidirect product multiplicative hom 3-Lie algebras
   $(L\oplus V, [\cdot, \cdot, \cdot]_{\rho A}, \alpha+A)$ associated to any
   representation $\rho A$ on $V$ of the multiplicative hom 3-Lie algebras
   $(L, [\cdot, \cdot, \cdot]_L, \alpha)$ (Proposition 4.2). We also describes
    module extensions (associated with a 3-cocycle) of hom 3-Lie algebras.
    In Section 5,  $T_{\theta}^{*}$-extensions of  hom 3-Lie algebras is studied.
\section{Hom 3-Lie algebra}
\begin{defn}
 $(1)$\, A hom 3-Lie algebra is a triple  $(L,[\cdot,\cdot,\cdot]_{L},\alpha)$ consisting of a vector space $L$, a 3-ary skew-symmetric operation
$[\cdot,\cdot,\cdot]_L:{\wedge}^3L\rightarrow L$ and a linear map $\alpha:L\rightarrow L$ satisfying
\begin{eqnarray}[\alpha(x),\alpha(y),[u,v,w]_{L}]_{L}&=&[[x,y,u]_{L},\alpha(v),\alpha(w)]_{L}+[\alpha(u),[x,y,v]_{L},\alpha(w)]_{L}\notag\\
&&+[\alpha(u),\alpha(v),[x,y,w]_{L}]_{L}.
\end{eqnarray}
where $x,y,u,v,w\in L$, the identity (1) is called the hom-Jacobi
identity.

$(2)$\, A hom 3-Lie algebra is called a multiplicative hom 3-lie
algebra if $\alpha$ is an algebraic morphism, i.e. for any $x,
y, z \in L$, we have
 $\alpha([x,y,z]_L)=[\alpha(x),\alpha(y),\alpha(z)]_L.$

 $(3)$\, A hom 3-Lie algebra is called a regular hom 3-Lie algebra if $\alpha$ is an algebra
 automorphism.

$(4)$\,  A sub-vector space $I\subseteq L$ is a hom 3-Lie
sub-algebra of $(L,[\cdot,\cdot,\cdot]_{L},\alpha)$  if
$\alpha(I)\subseteq I$  and $I$ is closed under the bracket
operation  $[\cdot,\cdot,\cdot]_L$, i.e. $[I,I,I]_L\in I$. $I$ is
called a hom 3-Lie algebra ideal of $L$  if $[I,L,L]_L\subseteq I$.
 \end{defn}

  If $\alpha ={\rm Id}$ in Definition 2.1, then  $(L,[\cdot,\cdot,\cdot]_{L},\alpha)$  is a  3-Lie
  algebra [8].

 Consider the  direct sum of two hom 3-Lie algebras, we have
 \begin{prop}\label{proposition2.1}
Given two hom 3-Lie algebras $(L, [\cdot,\cdot,\cdot]_L, \alpha)$ and $(\Gamma, [\cdot,\cdot, \cdot]_\Gamma, \beta)$,  there is a  hom 3-Lie algebra $(L\oplus\Gamma, [\cdot,\cdot,\cdot]_{L\oplus\Gamma}, \alpha+\beta)$,  where the 3-ary skew-symmetric operation $[\cdot, \cdot, \cdot]_{L\oplus\Gamma}:{\wedge}^2(L\oplus\Gamma\rightarrow L\oplus\Gamma)$ is given by
$${[u_1+v_1,u_2+v_2,u_3+v_3]}_{L\oplus\Gamma}={[u_1, u_2,u_3]}_L+{[v_1, v_2,v_3]}_\Gamma,  \forall  u_i\in L,  v_i\in \Gamma ,$$
and the linear map $(\alpha+\beta):L\oplus\Gamma \rightarrow L\oplus\Gamma$ is given by
$$(\alpha+\beta)(u+v)=\alpha(u)+\beta(v), \forall  u\in L,  v\in \Gamma.$$
 \end{prop}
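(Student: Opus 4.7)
The plan is to verify, directly from Definition 2.1(1), the two conditions required for $(L\oplus\Gamma,[\cdot,\cdot,\cdot]_{L\oplus\Gamma},\alpha+\beta)$ to be a hom 3-Lie algebra: that the proposed ternary operation is a well-defined 3-ary skew-symmetric map, and that it satisfies the hom-Jacobi identity (1) with twisting map $\alpha+\beta$. Since both the bracket and the linear map are defined componentwise with no cross-interaction between $L$ and $\Gamma$, I expect the verification to decouple cleanly into the corresponding identities on each summand.

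First I would record that the linearity and skew-symmetry of $[\cdot,\cdot,\cdot]_{L\oplus\Gamma}$ follow immediately from those of $[\cdot,\cdot,\cdot]_L$ and $[\cdot,\cdot,\cdot]_\Gamma$: transposing two of the three arguments $u_i+v_i$ transposes simultaneously the corresponding entries in each component bracket, producing an overall minus sign on each summand. Likewise, $\alpha+\beta$ is a well-defined linear endomorphism of $L\oplus\Gamma$ by construction, so the data of the definition is in place.

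The main step is the hom-Jacobi identity. I would fix five elements $x_i+y_i\in L\oplus\Gamma$ (playing the roles of $x,y,u,v,w$ in (1)) and expand both sides using the componentwise rule. On the left-hand side, the inner bracket $[u_3+y_3,u_4+y_4,u_5+y_5]_{L\oplus\Gamma}$ is $[u_3,u_4,u_5]_L+[y_3,y_4,y_5]_\Gamma$, and the outer bracket against $(\alpha+\beta)(x_1+y_1)$ and $(\alpha+\beta)(x_2+y_2)$ then splits, again by the componentwise rule, into a pure $L$-piece $[\alpha(x_1),\alpha(x_2),[u_3,u_4,u_5]_L]_L$ plus a pure $\Gamma$-piece $[\beta(y_1),\beta(y_2),[y_3,y_4,y_5]_\Gamma]_\Gamma$. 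Each of the three terms on the right-hand side of (1) undergoes the same splitting. Matching $L$-summands with $L$-summands and $\Gamma$-summands with $\Gamma$-summands, the identity (1) for $L\oplus\Gamma$ reduces to the sum of the hom-Jacobi identities for $L$ and for $\Gamma$, both of which hold by hypothesis.

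There is no substantive obstacle here; the proof is a routine bookkeeping verification. The only point requiring care is to confirm that the componentwise definition generates no cross-terms, so that the $L$-part and the $\Gamma$-part of the hom-Jacobi identity genuinely decouple, and to manage the notational weight of the five-argument identity when expanding it explicitly.
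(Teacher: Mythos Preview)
Your proposal is correct and follows the same approach as the paper: a direct verification from Definition 2.1, checking skew-symmetry and the hom-Jacobi identity componentwise. The paper's own proof is simply ``It is clear by Definition 2.1,'' so your argument is a more explicit rendering of exactly what the paper leaves to the reader.
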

\begin{proof} It is clear by Definition 2.1.\end{proof}
\begin{defn}Let $(L, [\cdot, \cdot, \cdot]_L, \alpha)$ and $(\Gamma, [\cdot, \cdot, \cdot]_\Gamma, \beta)$ be two hom 3-Lie algebras.
 A morphism $\phi:L \rightarrow \Gamma$ is said to be a morphism of hom 3-Lie algebras if
\begin{equation} \phi[u, v, w]_L=[\phi(u), \phi(v),\phi(w)]_\Gamma, \forall  u, v,w\in L, \end{equation}
\begin{equation}\phi\circ\alpha=\alpha\circ\phi. \end{equation}

Denote by $\mathfrak{G}_\phi\in L\oplus\Gamma$ be the graph of a
linear map $\phi:L \rightarrow \Gamma$.
\end{defn}

\begin{prop}\label{proposition2.1}
A linear map $\phi:(L, [\cdot, \cdot, \cdot]_L, \alpha)\rightarrow
(\Gamma, [\cdot, \cdot, \cdot]_\Gamma, \beta)$ is a morphism of hom
3-Lie algebras if and only if the graph $\mathfrak{G}_\phi\subset
L\oplus\Gamma$ is a hom 3-Lie sub-algebras of  $(L\oplus\Gamma,
[\cdot, \cdot, \cdot]_{L\oplus\Gamma}, \alpha+\beta)$.
\end{prop}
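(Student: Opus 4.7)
The plan is a routine two-direction argument, exploiting the definition of the graph $\mathfrak{G}_\phi = \{(u, \phi(u)) \mid u \in L\} \subset L \oplus \Gamma$ and the componentwise definitions of both the bracket $[\cdot,\cdot,\cdot]_{L\oplus\Gamma}$ and the twisting map $\alpha+\beta$ given in Proposition 2.2. Being a hom 3-Lie sub-algebra requires two conditions — invariance under $\alpha+\beta$ and closure under the bracket — and each will correspond exactly to one of the two conditions (2) and (3) in the definition of morphism.

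For the forward direction, I would assume $\phi$ satisfies (2) and (3) and verify the two closure conditions. For invariance, given $(u,\phi(u)) \in \mathfrak{G}_\phi$, I compute $(\alpha+\beta)(u,\phi(u)) = (\alpha(u), \beta(\phi(u)))$, and then rewrite $\beta(\phi(u)) = \phi(\alpha(u))$ using (3); the resulting pair is $(\alpha(u), \phi(\alpha(u))) \in \mathfrak{G}_\phi$. For bracket closure, I would take three elements $(u_i,\phi(u_i))$ and apply the componentwise bracket to get $\bigl([u_1,u_2,u_3]_L,\, [\phi(u_1),\phi(u_2),\phi(u_3)]_\Gamma\bigr)$; then (2) converts the second entry into $\phi([u_1,u_2,u_3]_L)$, so the result lies in $\mathfrak{G}_\phi$.

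For the reverse direction, I would assume $\mathfrak{G}_\phi$ is a hom 3-Lie sub-algebra and read off (2) and (3) by running each calculation above backwards: the bracket closure forces $\bigl([u_1,u_2,u_3]_L,\, [\phi(u_1),\phi(u_2),\phi(u_3)]_\Gamma\bigr)$ to be of the form $(x,\phi(x))$, and comparing first and second components yields (2); similarly, $\alpha+\beta$-invariance applied to $(u,\phi(u))$ yields $\beta(\phi(u)) = \phi(\alpha(u))$, which is (3). The hom-Jacobi identity on $\mathfrak{G}_\phi$ is inherited automatically from that on $L\oplus\Gamma$, so nothing extra needs to be checked.

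There is no substantive obstacle here; the only mild subtlety is making sure the direction of the implication is handled carefully in the reverse step, where one must use that every element of $\mathfrak{G}_\phi$ has its second coordinate determined by its first in order to extract equalities (rather than mere memberships) from the sub-algebra conditions. The proof is a short direct verification and will fit in a few lines.
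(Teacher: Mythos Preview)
Your proposal is correct and follows essentially the same approach as the paper's own proof: a direct two-direction verification in which bracket closure of $\mathfrak{G}_\phi$ corresponds to condition~(2) and $(\alpha+\beta)$-invariance corresponds to condition~(3), with each computation run forward and then read backward. There is nothing to add.
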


\begin{proof}
Let $\phi:(L, [\cdot, \cdot, \cdot]_L, \alpha)\rightarrow(\Gamma,
[\cdot, \cdot, \cdot]_\Gamma, \beta)$ be a morphism of hom 3-Lie
algebras. For any $u, v, w\in L$, we have
$$[u+\phi(u), v+\phi(v), w+\phi(w)]_{L\oplus\Gamma}=[u, v, w]_{L}+[\phi(u), \phi(v),\phi(w)]_{\Gamma}=[u, v,w]_{L}+\phi[u, v,w]_L.$$
Then the graph $\mathfrak{G}_\phi$ is closed under the bracket
operation $[\cdot, \cdot, \cdot]_{L\oplus\Gamma}.$ Furthermore, by
(2), we have
$$(\alpha+\beta)(u+\phi(u))=\alpha(u)+\beta\circ\phi(u)=\alpha(u)+\phi\circ\alpha(u),$$
which implies that $(\alpha+\beta)(\mathfrak{G}_\phi)\subset
\mathfrak{G}_\phi.$ Thus $\mathfrak{G}_\phi$ is a hom 3-Lie algebras
of $$(L\oplus\Gamma, [\cdot, \cdot, \cdot]_{L\oplus\Gamma},
\alpha+\beta).$$

Conversely,  if the graph $\mathfrak{G}_\phi\subset L\oplus\Gamma$
is a hom 3-Lie subalgebras of $(L\oplus\Gamma, [\cdot, \cdot,
\cdot]_{L\oplus\Gamma}, \alpha+\beta)$, then we have
$$[u+\phi(u), v+\phi(v),w+\phi(w)]_{L\oplus\Gamma}=[u, v,w]_{L}+[\phi(u), \phi(v),\phi(w)]_{\Gamma}\in\mathfrak{G}_\phi, $$
which implies that $$[\phi(u), \phi(v),\phi(w)]_{\Gamma}=\phi[u, v,w]_{L}.$$
Furthermore,  $(\alpha+\beta)(\mathfrak{G}_{\phi})\subset \mathfrak{G}_{\phi}$ yields that
$$(\alpha+\beta)(u+\phi(u))=\alpha(u)+\beta\circ\phi(u)\in\mathfrak{G}_\phi, $$
which is equivalent to the condition $\beta\circ\phi(u)=\phi\circ\alpha(u)$,  i.e. $\beta\circ\phi=\phi\circ\alpha$. Therefore,  $\mathfrak{G}_{\phi}$ is a morphism of hom 3-Lie algebras.
\end{proof}

\section{Derivations of hom 3-Lie algebras}
Let $(L, [\cdot, \cdot, \cdot]_L, \alpha)$ be a multiplicative hom 3-Lie algebra. For any nonnegative integer $k$, denote by $\alpha^k$ the $k$-times composition of $\alpha$,  i.e.$$\alpha^k=\alpha\circ\cdots\circ\alpha \quad(k-times).$$
In particular,  $\alpha^0=\Id$ and $\alpha^1=\alpha.$ If $(L, [\cdot, \cdot, \cdot]_L, \alpha)$ is a regular hom 3-Lie algebra, we denote by $\alpha^{-k}$ the $k$-times composition of $\alpha^1, $ the inverse of $\alpha.$
\begin{defn}
For any nonnegative integer $k$, a linear map $D:L \rightarrow L$ is called an $\alpha^k$-derivation of the multiplicative  hom 3-Lie algebra $(L, [\cdot, \cdot, \cdot]_L, \alpha)$, if
\begin{equation} [D, \alpha]=0, \quad i.e.\quad{D}\circ\alpha=\alpha\circ{D}, \end{equation}
and
\begin{eqnarray}D[u, v,w]_L&=&[D(u), \alpha^{k}(v),\alpha^{k}(w)]_L+[\alpha^{k}(u), D(v),\alpha^{k}(w)]_L\notag\\
&&+[\alpha^{k}(u),\alpha^{k}(v), D(w)]_L,  \forall u, v, w\in
L.\end{eqnarray} For a regular hom 3-Lie algebra,
$\alpha^{-k}$-derivations can be defined similarly.

Denote by $\Der_a^{k}(L)$ the set of $\alpha^k$-derivations of the multiplicative  hom 3-Lie algebra  $(L, [\cdot, \cdot, \cdot]_L, \alpha)$. For any $u_{1},u_{2}\in L$ satisfying $\alpha(u_{1})=u_{1},\alpha(u_{2})=u_{2}$,  define $\ad_{k}(u_{1},u_{2}):L \rightarrow L$ by\\
$$\ad_{k}(u_{1},u_{2})(v)=[u_{1},u_{2}, \alpha^{k}(v)]_L,   \forall v\in L.$$
Then $\ad_{k}(u_{1},u_{2})$ is an $\alpha^{k+1}$-derivation, which we call an \textbf{\inner} $\alpha^{k+1}$-derivation. In fact,  we have
$$\ad_{k}(u_{1},u_{2})(\alpha(v))=[u_{1},u_{2}, \alpha^{k+1}(v)]_L=\alpha([u_{1},u_{2}, \alpha^{k}(v)]_L)=\alpha\circ \ad_{k}(u_{1},u_{2})(v), $$
which implies that (4) in Defition 3.1 is satisfied. On the other hand,  we have
\begin{eqnarray*}&&\ad_{k}(u_{1},u_{2})([v_{1},v_{2},v_{3}]_L)\\
&=&[u_{1},u_{2}, \alpha^{k}([v_{1},v_{2},v_{3}]_L)]_L=[\alpha(u_{1}),\alpha(u_{2}), [\alpha^{k}(v_{1}), \alpha^{k}(v_{2},\alpha^{k}(v_{3}]_L]_L\\
&=&[[u_{1},u_{2},\alpha^{k}(v_{1})]_{L},\alpha^{k+1}(v_{2}),\alpha^{k+1}(v_{3})]_L+[\alpha^{k+1}(v_{1}),[u_{1},u_{2},\alpha^{k}(v_{2})]_{L},\alpha^{k+1}(v_{3})]_L\\
&&+[\alpha^{k+1}(v_{1}),\alpha^{k+1}(v_{2}),[u_{1},u_{2},\alpha^{k}(v_{3})]_{L}]_L\\
&=&[\ad_{k}(u_{1},u_{2})(v_{1}),\alpha^{k+1}(v_{2}),\alpha^{k+1}(v_{3})]_L+[\alpha^{k+1}(v_{1}),\ad_{k}(u_{1},u_{2})(v_{2}),\alpha^{k+1}(v_{3})]_L\\
&&+[\alpha^{k+1}(v_{1}),\alpha^{k+1}(v_{2}),\ad_{k}(u_{1},u_{2})(v_{3})]_L.
\end{eqnarray*}
Therefore,  $\ad_{k}(u_{1},u_{2})$ is an  $\alpha^{k+1}$-derivation.
Denote by ${\rm Inn}_{\alpha^{k}}(L)$ the set of inner
$\alpha^{k}$-derivations, i.e.
\begin{equation} {\rm Inn}_{\alpha^{k}}(L)=\{[u_{1},u_{2},\alpha^{k-1}(\cdot)]_L|u_{1},u_{2}
\in L, \alpha(u_{i})=(u_{i}),i=1,2\}.\end{equation} In particular we
use $\ad$ represent $\ad_{0}$.
\end{defn}
For any $D \in \Der_{\alpha^{k}}(L)$ and $D^{'} \in \Der_{\alpha^{s}}(L)$, define their commutator $[D, D^{'}]$ as usual:
\begin{equation}[D, D^{'}]=D \circ D^{'}-D^{'} \circ D.\end{equation}
\begin{lem}For any $D \in \Der_{\alpha^{k}}(L)$ and $D^{'} \in \Der_{\alpha^{s}}(L)$, we have
$$[D, D^{'}]\in \Der_{\alpha^{k+s}}(L).$$
\end{lem}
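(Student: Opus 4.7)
The plan is to verify the two defining conditions of an $\alpha^{k+s}$-derivation for $[D,D']$, namely commutation with $\alpha$ and the twisted Leibniz rule from Definition 3.1. The first is immediate: using $D\circ\alpha=\alpha\circ D$ and $D'\circ\alpha=\alpha\circ D'$, one has
\[
[D,D']\circ\alpha = D D'\alpha - D' D\alpha = D\alpha D' - D'\alpha D = \alpha DD' - \alpha D'D = \alpha\circ[D,D'],
\]
so condition (4) holds. Note also that induction on these commutation relations gives $D\circ\alpha^s=\alpha^s\circ D$ and $D'\circ\alpha^k=\alpha^k\circ D'$, which will be used throughout.

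Next I would compute $[D,D']\bigl([u,v,w]_L\bigr)$ by first applying $D'$ as an $\alpha^s$-derivation to $[u,v,w]_L$, then applying $D$ as an $\alpha^k$-derivation to each of the three resulting brackets, and symmetrically for $D'\circ D$. After using the commutation relations to move powers of $\alpha$ past $D$ or $D'$, every term involves either $DD'$, $D'D$, or a \emph{cross term} with both $D$ and $D'$ acting on distinct arguments surrounded by powers of $\alpha$ summing to $k+s$. The three \emph{main} terms from $D\circ D'$ produce $[DD'(u),\alpha^{k+s}(v),\alpha^{k+s}(w)]_L$ and its two cyclic analogues; the main terms from $D'\circ D$ produce the same expressions with $DD'$ replaced by $D'D$. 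So in the commutator these combine exactly into $[[D,D'](u),\alpha^{k+s}(v),\alpha^{k+s}(w)]_L$ and its two analogues, which is the desired right-hand side of (5).

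The key point (and the only real thing to check) is that all six \emph{cross} terms cancel between $D\circ D'$ and $D'\circ D$. For example, expanding $D\bigl([D'(u),\alpha^s(v),\alpha^s(w)]_L\bigr)$ contributes a term $[\alpha^k D'(u),\alpha^s D(v),\alpha^{k+s}(w)]_L$, and expanding $D'\bigl([\alpha^k(u),D(v),\alpha^k(w)]_L\bigr)$ contributes $[D'\alpha^k(u),\alpha^s D(v),\alpha^{k+s}(w)]_L$; since $D'\alpha^k=\alpha^k D'$, these two expressions coincide and cancel in the difference $DD'-D'D$. A parallel pairing handles each of the remaining five cross terms in the same way, so they vanish identically.

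The only mildly obstructive step is the bookkeeping of the cross terms, which requires being careful to list all six pairings and to apply $D\alpha^s=\alpha^s D$, $D'\alpha^k=\alpha^k D'$ at the right moments. Once this is done, the twisted Leibniz identity for $[D,D']$ with twist $\alpha^{k+s}$ drops out directly, completing the proof that $[D,D']\in\Der_{\alpha^{k+s}}(L)$.
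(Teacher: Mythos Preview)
Your proposal is correct and follows essentially the same approach as the paper: both proofs verify commutation with $\alpha$ directly and establish the twisted Leibniz rule by expanding $D\circ D'$ and $D'\circ D$ on a bracket, using $D\circ\alpha^s=\alpha^s\circ D$ and $D'\circ\alpha^k=\alpha^k\circ D'$ to see that the six cross terms cancel pairwise while the remaining terms assemble into the $\alpha^{k+s}$-derivation identity. The only cosmetic difference is that the paper writes out all eighteen terms explicitly before cancelling, whereas you describe the structure and exhibit one representative pairing.
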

\begin{proof}
For any $u, v, w \in L,$ we have
\begin{eqnarray*}&&[D, D^{'}]([u, v,w]_L)=D\circ D^{'}([u, v,w]_L)-D^{'}\circ D([u, v,w]_L)\\
&=&D([D^{'}(u), \alpha^{s}(v),\alpha^{s}(w)]_L+[\alpha^{s}(u), D^{'}(v),\alpha^{s}(w)]_{L}+[\alpha^{s}(u),\alpha^{s}(v) ,D^{'}(w)]_{L})\\
&&-D^{'}([D(u), \alpha^{k}(v),\alpha^{k}(w)]_L+[\alpha^{k}(u), D(v),\alpha^{k}(w)]_{L}+[\alpha^{k}(u),\alpha^{k}(v) ,D(w)]_{L})\\
&=&[D\circ D^{'}(u), \alpha^{k+s}(v),\alpha^{k+s}(w)]_L+[\alpha^{k}\circ D^{'}(u), D\circ\alpha^{s}(v),\alpha^{k+s}(w)]_L\\
&&+[\alpha^{k}\circ D^{'}(u),\alpha^{k+s}(v), D\circ\alpha^{s}(w)]_L+[D\circ \alpha^{s}(u), \alpha^{k} \circ D^{'}(v),\alpha^{k+s}(w)]_L\\
&&+[\alpha^{k+s}(u), D \circ D^{'}(v),\alpha^{k+s}(w)]_L+[\alpha^{k+s}(u),\alpha^{k}\circ D^{'}(v), D\circ \alpha^{s}(w)]_L\\
&&+[D\circ \alpha^{s}(u), \alpha^{k+s}(v),\alpha^{k}\circ D^{'}(w)]_L+[\alpha^{k+s}(u),D\circ \alpha^{s}(v),\alpha^{k}\circ D^{'}(w)]_L\\
&&+[ \alpha^{k+s}(u),\alpha^{k+s}(v),D\circ D^{'}(w)]_L-[D^{'}\circ D(u), \alpha^{k+s}(v),\alpha^{k+s}(w)]_{L}\\
&&-[\alpha^{s}\circ D(u), D^{'}\circ\alpha^{k}(v),\alpha^{k+s}(w)]_L-[\alpha^{s}\circ D(u),\alpha^{k+s}(v), D^{'}\circ\alpha^{k}(w)]_L\\
&&-[D^{'}\circ \alpha^{k}(u), \alpha^{s}\circ D(v),\alpha^{k+s}(w)]_L-[\alpha^{k+s}(u), D^{'} \circ D(v),\alpha^{k+s}(w)]_L\\
&&-[\alpha^{k+s}(u),\alpha^{s}\circ D(v), D^{'}\circ \alpha^{k}(w)]_L-[D^{'}\circ \alpha^{k}(u), \alpha^{k+s}(v),\alpha^{s}\circ D(w)]_L\\
&&-[ \alpha^{k+s}(u),D^{'}\circ \alpha^{k}(v),\alpha^{s}\circ D(w)]_L-[ \alpha^{k+s}(u),\alpha^{k+s}(v),D^{'}\circ D(w)]_L.
\end{eqnarray*}
Since D and $D^{'}$ satisfy
$$D\circ \alpha=\alpha \circ D, D^{'} \circ \alpha=\alpha \circ D^{'}, $$
we have
$$D\circ \alpha^{s}=\alpha^{s} \circ D, D^{'} \circ \alpha^{k}=\alpha^{k} \circ D^{'}. $$
Then
\begin{eqnarray*}
&&[D, D^{'}]([u, v,w]_L)\\
&=&[D\circ D^{'}(u)-D^{'}\circ D(u), \alpha^{k+s}(v),\alpha^{k+s}(w)]_L\\
&&+[\alpha^{k+s}(u),D\circ D^{'}(v)-D^{'}\circ D(v), \alpha^{k+s}(w)]_L\\
&&+[\alpha^{k+s}(u),\alpha^{k+s}(v),D\circ D^{'}(w)-D^{'}\circ D(w)]_L\\
&=&[[D, D^{'}](u), \alpha^{k+s}(v),\alpha^{k+s}(w)]_L+[\alpha^{k+s}(u),[D, D^{'}](v), \alpha^{k+s}(w)]_L\\
&&+[\alpha^{k+s}(u),\alpha^{k+s}(v),[D, D^{'}](w)]_L.
\end{eqnarray*}
Furthermore, it is straightforward to see that
\begin{eqnarray*}
[D, D^{'}]\circ \alpha&=&D\circ D^{'}\circ\alpha-D^{'}\circ D\circ\alpha\\
&=&\alpha\circ D\circ D^{'}-\alpha\circ D^{'}\circ D\\
&=&\alpha \circ [D, D^{'}],
\end{eqnarray*}
which yields that $[D, D^{'}]\in \Der_{\alpha^{k+s}}(L)$.
\end{proof}
Denote by
 \begin{eqnarray}
 \Der(L)=\oplus_{k\geq 0} \Der_{\alpha^{k}}(L).
 \end{eqnarray}

By Lemma 3.2, we have
\begin{prop}With the above notations,  \Der(L) is a Lie algebra,  in which the Lie bracket is given by (7).\end{prop}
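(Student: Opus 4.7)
The plan is to show that $\Der(L)$ with the commutator bracket (7) satisfies the three defining properties of a Lie algebra: the bracket is internal (closed in $\Der(L)$), antisymmetric, and satisfies the Jacobi identity. Most of the work has already been done by Lemma 3.2.

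First, I would observe that $\Der(L)$ is defined as a graded vector space $\bigoplus_{k\geq 0}\Der_{\alpha^k}(L)$, and by bilinearity it suffices to check closure of $[\cdot,\cdot]$ on homogeneous elements. For $D\in\Der_{\alpha^k}(L)$ and $D'\in\Der_{\alpha^s}(L)$, Lemma 3.2 gives $[D,D']\in\Der_{\alpha^{k+s}}(L)\subseteq\Der(L)$, so the bracket is internal, and moreover respects the $\mathbb{N}$-grading (so $\Der(L)$ is in fact a graded Lie algebra). Extending bilinearly to finite sums then defines a bilinear operation on all of $\Der(L)$.

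Next, antisymmetry $[D,D']=-[D',D]$ is immediate from the definition $[D,D']=D\circ D'-D'\circ D$. For the Jacobi identity, I would apply the standard commutator computation: for any three linear endomorphisms $D_1,D_2,D_3$ of $L$, expanding $[D_1,[D_2,D_3]]+[D_2,[D_3,D_1]]+[D_3,[D_1,D_2]]$ yields six terms of the form $D_i\circ D_j\circ D_k$ that cancel in pairs. Since $\Der_{\alpha^k}(L)\subseteq\mathrm{End}(L)$ for every $k$, this identity holds on homogeneous elements and hence, by trilinearity of both sides, on all of $\Der(L)$.

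There is no real obstacle here: the hard work — verifying that the commutator of two derivations is again a derivation of the expected twist — is precisely what Lemma 3.2 established. The only minor point worth noting in the write-up is that the bracket respects the grading, so after extending bilinearly the resulting operation remains inside $\Der(L)$, and the Jacobi identity inherited from $\mathrm{End}(L)$ (which is associative under composition) transfers to $\Der(L)$ without modification.
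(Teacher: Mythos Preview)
Your proposal is correct and follows the same approach as the paper: the paper's proof is simply the one line ``By Lemma 3.2, we have'' preceding the proposition, leaving antisymmetry and the Jacobi identity implicit as standard facts about commutators in $\mathrm{End}(L)$. Your write-up is just a more explicit version of this, spelling out the closure via Lemma 3.2, the antisymmetry from the definition, and the Jacobi identity inherited from associativity of composition.
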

\begin{re}Similarly,  we can obtain a Lie algebra  $\oplus _{k}\Der_{\alpha^{k}}(L)$,  where $k$ is any integer, $L$ is a regular hom 3-Lie algebra.
\end{re}

At the end of this section,  we consider the derivation extension of the multiplicative hom 3-Lie algebra $(L, [\cdot, \cdot,, \cdot]_L, \alpha)$ and
 give an application of the $\alpha$-derivation $\Der_{\alpha}(L)$.

For any linear map $D:L \rightarrow L$,  consider the vector space $L \oplus \mathbb{R}D$. Define a 3-ary skew-symmetric operation $[\cdot, \cdot, \cdot]_{L}$ on $L \oplus \mathbb{R}D$ by
$$[u+lD, v+mD,w+nD]_{D}=[u, v,w,]_{L}+lD(v)+mD(w)-nD(u), $$
$$ [u, v,w]_{D}=[u, v,w]_{L}, [D, u,v]_{D}=[v, D,u]_{D}=-[ u,v,D]_{D}=D(u), \forall u, v, w\in L.$$
Define a linear map $\alpha^{'}:L \oplus \mathbb{R}D \rightarrow L \oplus \mathbb{R}D$ by $\alpha^{'}(u+D)=\alpha(u)+D, $ i.e.\\
\begin{displaymath}
\mathbf{\alpha^{'}} =
\left( \begin{array}{cc}
\alpha & 0 \\
0 & 1  \\
\end{array} \right).
\end{displaymath}
\begin{thm}
With the above notations, $(L \oplus \mathbb{R}D,  [\cdot, \cdot, \cdot]_{D}, \alpha^{'})$ is a multiplicative hom 3-Lie algebra if and only if D is an $\alpha$- derivation of the multiplicative hom 3-Lie algebra $(L, [\cdot, \cdot, \cdot]_L, \alpha)$.
\end{thm}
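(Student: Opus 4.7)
The plan is to verify both directions by checking the three defining properties of a multiplicative hom 3-Lie algebra on the extended space $L\oplus\mathbb{R}D$: skew-symmetry of the bracket (built into the construction), multiplicativity of $\alpha'$, and the hom-Jacobi identity. By linearity, both the multiplicativity condition and the hom-Jacobi identity reduce to finitely many cases, indexed by whether each argument is of the form $u\in L$ or equals $D$. Since $[D,D,\cdot]_{D}=0$ by skew-symmetry, only cases in which at most one argument per bracket equals $D$ can contribute nontrivially.

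For the multiplicativity condition, I would begin with the case in which all arguments lie in $L$, where it reduces immediately to the multiplicativity of $\alpha$ on $L$. The remaining cases involve a single $D$; for instance, $\alpha'([u,v,D]_{D})=\alpha'(-D(u))=-\alpha(D(u))$, while $[\alpha'(u),\alpha'(v),\alpha'(D)]_{D}=[\alpha(u),\alpha(v),D]_{D}=-D(\alpha(u))$. Equality of these two expressions is equivalent to $\alpha\circ D=D\circ\alpha$, which is precisely condition (4) of Definition 3.1. Checking the analogous identities with $D$ placed in the first or second slot produces the same relation, and no further conditions.

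For the hom-Jacobi identity on a 5-tuple $(x,y,u,v,w)$, I would separate cases by how many of the five arguments equal $D$. The case with no $D$ is simply the hom-Jacobi identity on $L$. In the case where exactly one argument is $D$, after unwinding the definitions of $[\cdot,\cdot,\cdot]_{D}$ and $\alpha'$ and using the already-established commutation $D\circ\alpha=\alpha\circ D$, the identity collapses to the $\alpha$-derivation relation (5) of Definition 3.1, applied to the three remaining arguments on the inner bracket of the left-hand side. Conversely, assuming $D$ is an $\alpha$-derivation, exactly this computation shows each such case holds. Cases with two or more $D$'s vanish on both sides by skew-symmetry of the bracket.

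The main obstacle is the symmetric bookkeeping in the one-$D$ case of the hom-Jacobi identity: $D$ may occupy any of the five slots $x,y,u,v,w$, and since $D$ in an outer-bracket slot (the $x,y$ positions) behaves differently from $D$ in an inner-bracket slot (the $u,v,w$ positions), the resulting equations superficially look distinct. The heart of the verification is to check that all five placements of $D$ reduce to the single $\alpha$-derivation identity (5), so that this one relation is simultaneously necessary and sufficient for $(L\oplus\mathbb{R}D,[\cdot,\cdot,\cdot]_{D},\alpha')$ to be a multiplicative hom 3-Lie algebra.
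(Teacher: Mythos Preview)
Your approach is essentially the paper's: reduce multiplicativity of $\alpha'$ to the commutation $D\circ\alpha=\alpha\circ D$ (condition (4)), and reduce the hom-Jacobi identity to the $\alpha$-derivation relation (5). The paper actually checks fewer cases than you do---it verifies only the single placement with $D$ in an outer slot (computing $[\alpha'(x),\alpha'(D),[u,v,w]_D]_D$) and declares the rest ``obvious''---so your case enumeration is, if anything, more systematic.

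One claim should be tightened. Your assertion that hom-Jacobi instances with two or more copies of $D$ among $(x,y,u,v,w)$ ``vanish on both sides by skew-symmetry'' is only immediate when both $D$'s land in the \emph{same} bracket. When one $D$ lies in $\{x,y\}$ and the other in $\{u,v,w\}$ (e.g.\ $x=u=D$), neither side is zero term-by-term: the left side reads $[D,\alpha(y),[D,v,w]_D]_D$, and on the right two terms survive after the inner $[D,y,D]_D$ drops out. These ``distributed'' two-$D$ cases still require a short direct check rather than an appeal to skew-symmetry alone. The paper's proof does not treat them either, so your argument is at least as complete as the original; but since you explicitly claim they vanish by skew-symmetry, you should either supply the one-line verification for the distributed cases or soften the claim.
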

\begin{proof}First, for any $u,  v,  w\in L$, $l,  m, n\in \mathbb{R}$, we have
\begin{eqnarray*}
&&\alpha^{'}([u+lD, v+mD,w+nD]_{D})=\alpha^{'}([u, v,w,]_{L}+lD(v)+mD(w)-nD(u))\\
&=&\alpha([u, v,w]_{L})+l\alpha\circ D(v)+m\alpha\circ
D(w)-n\alpha\circ D(u)
\end{eqnarray*}
and
\begin{eqnarray*}
&&[\alpha^{'}(u+lD), \alpha^{'}(v+mD),\alpha^{'}(w+nD)]_{D}=[\alpha(u)+lD, \alpha(v)+mD,\alpha(w)+nD]_{D}\\
&=&[\alpha(u), \alpha(v),\alpha(w)]_{L}+lD\circ \alpha(v)+mD\circ \alpha(w)-nD\circ \alpha(u).
\end{eqnarray*}
Since $\alpha$ is an algebra morphism,  $\alpha^{'}$ is an algebra
morphism if and only if
$$D\circ \alpha=\alpha\circ D.$$
 On the other hand,  we have
\begin{eqnarray*}
&&[\alpha^{'}(x),\alpha^{'}(D), [u, v,w]_{D}]_{D}=[\alpha(x),D, [u, v,w]_{D}]_{D}\\
&=&[[x, D,u]_{D},\alpha(v),\alpha(w)]_{D}+[\alpha(u),[x, D,v]_{D},\alpha(w)]_{D}+[\alpha(u),\alpha(v),[x, D,w]_{D}]_{D}\\
&=&[D(u),\alpha(v),\alpha(w)]_{D}+[\alpha(u),D(v),\alpha(w)]_{D}+[\alpha(u),\alpha(v),D(w)]_{D}.
\end{eqnarray*}
 It is obvious that the hom-Jacobi identity is satisfied  if and only if the following condition holds
$$D[u, v,w]_{D}=[D(u),\alpha(v),\alpha(w)]_{D}+[\alpha(u),D(v),\alpha(w)]_{D}+[\alpha(u),\alpha(v),D(w)]_{D}.$$
So $(L \oplus RD,  [\cdot, \cdot, \cdot]_{D}, \alpha^{'})$ is a
multiplicative hom 3-Lie algebra if and only if $D$ is an
$\alpha$-derivation of $(L, [\cdot, \cdot, \cdot]_L, \alpha)$.
\end{proof}

\section{$T_{\theta}$-extension of hom 3-Lie algebras}

In this section, we study representations of  multiplicative hom
3-Lie algebras. We also prove that one can form semidirect product
multiplicative hom 3-Lie algebras when given  representations of
multiplicative hom 3-Lie algebras. Let $(L, [\cdot, \cdot, \cdot]_L,
\alpha)$ be a multiplicative hom 3-Lie algebra and $V$ be an vector
space. Let $A\in gl(V)$ be an arbitrary linear transformation from
$V$  to $V$.
\begin{defn}
A representation of  multiplicative hom 3-Lie algebra  $(L, [\cdot, \cdot, \cdot]_L, \alpha)$ on the vector space V with respect to  $A\in gl(V)$ is a bilinear map $\rho_ A:L \rightarrow gl(V)$,  such that for any $ x,y,z,u\in L$,  the basic property of this map is that:
\begin{eqnarray}
&&\rho_{A}(\alpha(u),\alpha(v))\circ A=A\circ\rho_{A}(u,v),\\
&&\rho_{A}([x,y,z],\alpha(u))\circ A\notag\\
&&=\rho_{A}(\alpha(y),\alpha(z))\rho_{A}(x,u)+\rho_{A}(\alpha(z),\alpha(x))\rho_{A}(y,u)+\rho_{A}(\alpha(x),\alpha(y))\rho_{A}(z,u),\\
&&\rho_{A}(\alpha(x),\alpha(y))\rho_{A}(z,u)\notag\\
&&=\rho_{A}(\alpha(z),\alpha(u))\rho_{A}(x,y)+\rho_{A}([x,y,z],\alpha(u))\circ A+\rho_{A}(\alpha(z),[x,y,u])\circ A.
\end{eqnarray}
then $(V,\rho_{A})$ is called a representation of $L$, or $V$ is an $L$-module.

As an example, let $\rho_{A}(x,y)=\ad(x,y)$ for all $x,y\in L$. Then $(L,\ad)$ is an $L$-module, called the adjoint module of $L$. If $(V,\rho_{A})$ is an L-module, then the dual space $V^{\ast}$ of $V$ is an $L$-module in the following way. For $f\in V^{\ast}, v\in V, x,y,\in L$, define $\rho_{A}^{\ast}:L\wedge L\longrightarrow End(V^{*}).$
 \begin{eqnarray}\rho_{A}^{\ast}(x,y)(f)(v)=-f(\rho_{A}(x,y)(v)).\end{eqnarray}
 We call the $V^{*}$ the dual module of  $V$.
\end{defn}

In the case of Lie algebras, we can form semidirect products when given representations. Similarly,  we have
\begin{prop}
Given a representation $\rho _A$ of the multiplicative hom 3-Lie algebra $(L, [\cdot, \cdot, \cdot]_L, \alpha)$ on the vector space V with respect to $A\in gl(V) $. Define a 3-ary skew-symmetric bracket operation $[\cdot, \cdot, \cdot]_{\rho A}:\wedge^{3}(L \oplus V) \rightarrow L \oplus V$ by
\begin{equation}
[u+X, v+Y,w+Z]_{\rho A}=[u, v,w]_{L}+\rho_{A}(u,v)(Z)+\rho_{A}(w,u)(Y)+\rho_{A}(v,w)(X),
\end{equation}
Define $\alpha+A:L\oplus V \rightarrow L \oplus V$ by
$$(\alpha+A)(u+X)=\alpha(u)+AX.$$
Then $(L\oplus V, [\cdot, \cdot, \cdot]_{\rho A}, \alpha+A)$ is a multiplicative hom 3-Lie algebra,  which we call the
semidirect product of the multiplicative hom 3-Lie algebra $(L, [\cdot, \cdot, \cdot]_L, \alpha)$ and V.
\end{prop}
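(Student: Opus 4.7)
The plan is to verify directly that $(L\oplus V,[\cdot,\cdot,\cdot]_{\rho A},\alpha+A)$ satisfies the three requirements of Definition~2.1: the bracket is a well-defined $3$-ary skew-symmetric operation on $L\oplus V$, the map $\alpha+A$ is an algebra morphism for that bracket, and the hom-Jacobi identity~(1) holds. The first point is essentially immediate: the $L$-component of~(12) is skew-symmetric by hypothesis, while the three $V$-terms $\rho_A(u,v)Z+\rho_A(w,u)Y+\rho_A(v,w)X$ are visibly cyclic in the pairs $(u,X),(v,Y),(w,Z)$ and flip sign under any transposition, since $\rho_A$ is antisymmetric in its two $L$-arguments.

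For multiplicativity I would compute $(\alpha+A)[u+X,v+Y,w+Z]_{\rho A}$ and $[(\alpha+A)(u+X),(\alpha+A)(v+Y),(\alpha+A)(w+Z)]_{\rho A}$ separately. Their $L$-parts agree because $\alpha$ is already an algebra morphism of $L$. Their $V$-parts reduce to three identities of the form $A\rho_A(u,v)Z=\rho_A(\alpha(u),\alpha(v))(AZ)$, each of which is an instance of the compatibility relation~(8).

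The substantive part, and the only place I expect real work, is the hom-Jacobi identity applied to five elements $\xi_i=x_i+X_i\in L\oplus V$, $i=1,\dots,5$. I would expand both sides of~(1) for the bracket $[\cdot,\cdot,\cdot]_{\rho A}$ and split each side into an $L$-part and a $V$-part. The $L$-parts agree automatically because $(L,[\cdot,\cdot,\cdot]_L,\alpha)$ itself satisfies the hom-Jacobi identity. For the $V$-part I would group the terms according to which of $X_1,\dots,X_5$ they are linear in. The coefficient of each inner variable $X_3,X_4,X_5$ reduces to an instance of the representation axiom~(10), applied to the four $L$-arguments consisting of $x_1,x_2$ and two of $x_3,x_4,x_5$; the coefficient of each outer variable $AX_1,AX_2$ reduces, after one use of the skew-symmetry of $\rho_A$ in its two $L$-arguments, to an instance of~(9). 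The bookkeeping is lengthy but entirely mechanical, and it is where all the labor of the proof sits.
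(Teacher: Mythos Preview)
Your outline is correct and matches the paper's own proof: verify multiplicativity of $\alpha+A$ via the compatibility axiom, then expand the hom-Jacobi identity, separate $L$- and $V$-components, and reduce the $V$-component to the two remaining representation axioms. Your equation labels are each one lower than the paper's (compatibility is~(9), the two structural axioms are~(10) and~(11), the bracket is~(13)), and your grouping by the variable $X_i$ is in fact slightly more explicit than the paper, which only writes out the $X_3$ case via~(11) and dismisses the rest with ``similarly''; but the argument is the same.
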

\begin{proof}
First we show that $\alpha+A$ is an algebra morphism, On one hand, we have
\begin{eqnarray*}
&&(\alpha+A)([u+X, v+Y,w+Z]_{\rho A})\\
&=&(\alpha+A)([u, v,w]_{L}+\rho_{A}(u,v)(Z)+\rho_{A}(w,u)(Y)+\rho_{A}(v,w)(X))\\
&=&\alpha([u, v,w]_{L})+A\circ\rho_{A}(u,v)(Z)+A\circ\rho_{A}(w,u)(Y)+A\circ\rho_{A}(v,w)(X).
\end{eqnarray*}
On the other hand,  we have
\begin{eqnarray*}
&&[(\alpha+A)(u+X), (\alpha+A)(v+Y),(\alpha+A)(w+Z)]_{\rho A}\\
&=&[\alpha(u)+AX, \alpha(v)+AY, \alpha(w)+AZ]_{\rho_{A}}\\
&=&[\alpha(u), \alpha(v),\alpha(w)]_{L}+\rho_{A}(\alpha(u),\alpha(v))(AZ)+A\circ\rho_{A}(\alpha(w),\alpha(u))(AY)\\
&&+A\circ\rho_{A}(\alpha(v),\alpha(w))(AX).
\end{eqnarray*}
Since $\alpha$ is an algebra morphism,  $\rho_A$ and A satisfy
$(9)$,  it follows that $\alpha+A$ is a morphism with respect to the
bracket to $[\cdot, \cdot,\cdot]_{\rho A}$. It is not hard to deduce
that
\begin{eqnarray*}
&&[(\alpha+A)(v_{1}+Y_{1}), (\alpha+A)(v_{2}+Y_{2}),[u_{1}+X_{1}, u_{2}+X_{2},u_{3}+X_{3}]_{\rho A}]_{\rho A}\\
&=&[\alpha (v_{1}), \alpha(v_{2}),[u_{1}, u_{2},u_{3}]_{L}]_{L}+\rho_{A}(\alpha (v_{1}), \alpha(v_{2}))\rho_{A}(u_{1}, u_{2})(X_{3})\\
&&+\rho_{A}(\alpha (v_{1}), \alpha(v_{2}))\rho_{A}(u_{3}, u_{1})(X_{2})+\rho_{A}(\alpha (v_{1}), \alpha(v_{2}))\rho_{A}(u_{2}, u_{3})(X_{1})\\
&&+\rho_{A}([u_{1}, u_{2},u_{3}]_{L},\alpha(v_{1}))\circ
A(Y_{2})+\rho_{A}(\alpha (v_{2}),[u_{1}, u_{2},u_{3}]_{L})\circ
A(Y_{1}),
\end{eqnarray*}
\begin{eqnarray*}
&&[[v_{1}+Y_{1},v_{2}+Y_{2},u_{1}+X_{1}]_{\rho A},(\alpha+A)(u_{2}+X_{2}),(\alpha+A)(u_{3}+X_{3})]_{\rho A}\\
&=&[[v_{1}, v_{2},u_{1}]_{L},\alpha(u_{2}),\alpha(u_{3})]_{L}+\rho_{A}([v_{1},v_{2},u_{1}]_{L},\alpha(u_{2}))\circ A(X_{3})\\
&&+\rho_{A}(\alpha(u_{3}),[v_{1}, v_{2},u_{1}]_{L})\circ A(X_{2})+\rho_{A}(\alpha(u_{2}),\alpha(u_{3}))\rho_{A}(v_{1},v_{2})(X_{1})\\
&&+\rho_{A}(\alpha(u_{2}),\alpha(u_{3}))\rho_{A}(u_{1},v_{1})(Y_{2})+\rho_{A}(\alpha(u_{2}),\alpha(u_{3}))\rho_{A}(v_{2},u_{1})(Y_{1}),
\end{eqnarray*}
\begin{eqnarray*}
&&[(\alpha+A)(u_{1}+X_{1}),[v_{1}+Y_{1},v_{2}+Y_{2},u_{2}+X_{2}]_{\rho A},(\alpha+A)(u_{3}+X_{3})]_{\rho A}\\
&=&[\alpha(u_{1}),[v_{1},v_{2},u_{2}]_{L},\alpha(u_{3})]_{\rho_A}+\rho_{A}(\alpha(u_{1}),[v_{1},v_{2},u_{2}]_{L})\circ A(X_{3})\\
&&+\rho_{A}([v_{1},v_{2},u_{2}]_{L},\alpha(u_{3}))\circ A(X_{1})+\rho_{A}(\alpha(u_{3}),\alpha(u_{1}))\rho_{A}(v_{1},v_{2})(X_{2})\\
&&+\rho_{A}(\alpha(u_{3}),\alpha(u_{1}))\rho_{A}(u_{2},v_{1})(Y_{2})+\rho_{A}(\alpha(u_{3}),\alpha(u_{1}))\rho_{A}(v_{2},u_{2})(Y_{1}),
\end{eqnarray*}
\begin{eqnarray*}
&&[(\alpha+A)(u_{1}+X_{1}),(\alpha+A)(u_{2}+X_{2}),[v_{1}+Y_{1},v_{2}+Y_{2},u_{3}+X_{3}]_{\rho A}]_{\rho A}\\
&=&[\alpha(u_{1}),\alpha(u_{2}),[v_{1},v_{2},u_{3}]_{L}]_{\rho_A}+\rho_{A}(\alpha(u_{1}),\alpha(u_{2}))\rho_{A}(v_{1},v_{2})(X_{3})\\
&&+\rho_{A}(\alpha(u_{1}),\alpha(u_{2}))\rho_{A}(u_{3},v_{1})(Y_{2})+\rho_{A}(\alpha(u_{1}),\alpha(u_{2}))\rho_{A}(v_{2},u_{3})(Y_{1})\\
&&+\rho_{A}([v_{1},v_{2},u_{3}]_{L},\alpha(u_{1}))\circ
A(X_{2})+\rho_{A}(\alpha(u_{2}),[v_{1},v_{2},u_{2}]_{L})\circ
A(X_{1}).
\end{eqnarray*}
By (11), we have
\begin{eqnarray*}
&&\rho_{A}(\alpha (v_{1}), \alpha(v_{2}))\rho_{A}(u_{1},u_{2})(X_{3})\\
&=&\rho_{A}(\alpha(u_{1}),\alpha(u_{2}))\rho_{A}(v_{1},v_{2})(X_{3})+\rho_{A}([v_{1},v_{2},u_{1}]_{L},\alpha(u_{2}))\circ A(X_{3})\\
&&+\rho_{A}(\alpha(u_{1}),[v_{1},v_{2},u_{2}]_{L})\circ A(X_{3}).
\end{eqnarray*}
Similar, we have
 \begin{eqnarray*}
 &&[(\alpha+A)(v_{1}+Y_{1}), (\alpha+A)(v_{2}+Y_{2}),[u_{1}+X_{1}, u_{2}+X_{2},u_{3}+X_{3}]_{\rho A}]_{\rho A}\\
 &=&[[v_{1}+Y_{1},v_{2}+Y_{2},u_{1}+X_{1}]_{\rho A},(\alpha+A)(u_{2}+X_{2}),(\alpha+A)(u_{3}+X_{3})]_{\rho A}\\
 &&+[(\alpha+A)(u_{1}+X_{1}),[v_{1}+Y_{1},v_{2}+Y_{2},u_{2}+X_{2}]_{\rho A},(\alpha+A)(u_{3}+X_{3})]_{\rho A}\\
 &&+[(\alpha+A)(u_{1}+X_{1}),(\alpha+A)(u_{2}+X_{2}),[v_{1}+Y_{1},v_{2}+Y_{2},u_{3}+X_{3}]_{\rho A}]_{\rho A}.
 \end{eqnarray*}
The hom-Jacobi identity is satisfied. Thus,  $(L\oplus V, [\cdot, \cdot, \cdot]_{\rho_A}, \alpha+A)$ is a multiplicative hom 3-Lie algebra.
\end{proof}

\begin{defn}
Let $(L, [\cdot, \cdot, \cdot]_{L}, \alpha)$ be a hom 3-Lie algebra and $(V ,\rho_A)$ be an $L$-module. If $\theta:L\wedge L\wedge L\rightarrow V$ is a 3-linear mapping and satisfies, for every $x,y,z,u,v\in L,$
 \begin{eqnarray}
  &&\theta([x,u,v]_{L},\alpha(y),\alpha(z))+\theta([y,u,v]_{L},\alpha(z),\alpha(x))+\theta(\alpha(x),\alpha(y),[z,u,v]_{L})\notag\\
  &&-\theta([x,y,z]_{L},\alpha(u),\alpha(v))+\rho_{A}(\alpha(y),\alpha(z))\theta(x,u,v)+\rho_{A}(\alpha(z),\alpha(x))\theta(y,u,v)\notag\\
  &&+\rho_{A}(\alpha(x),\alpha(y))\theta(z,u,v)-\rho_{A}(\alpha(u),\alpha(v))\theta(x,y,z)=0.
  \end{eqnarray}
  then $\theta$ is called a $3$-cocycle associated with $\rho_{A}$.
\end{defn}

Cocycles are a fundamental concept in the theory of cohomology of Lie algebras. Using $3$-cocycles, we are able to construct new hom 3-Lie algebras by adding $L$-modules as follow.
\begin{lem}Let $(L, [\cdot, \cdot, \cdot]_{L}, \alpha)$ be a hom 3-Lie algebra and $(V ,\rho_A)$ be an $L$-module. Define $\alpha+A:L\oplus V \rightarrow L \oplus V$ by
$$(\alpha+A)(x+f)=\alpha(x)+A\circ f,$$
and $f\in V$ is compatible with $\alpha$ and A in sense that $A\circ f=f\circ \alpha$.
If $\theta:L\wedge L\wedge L\rightarrow V$ is a $3$-cocycle, then $(L\oplus V,[\cdot,\cdot,\cdot]_{\theta},\alpha+A)$ is a hom 3-Lie algebra under the following multiplication:
\begin{eqnarray}&&[x_{1}+y_{1},x_{2}+y_{2},x_{3}+y_{3}]_{\theta}\notag\\
&=&[x_{1},x_{2},x_{3}]_{L}+\theta(x_{1},x_{2},x_{3})+\rho_A(x_{1},x_{2})(y_{3})+\rho_A(x_{3},x_{1})(y_{2})+\rho_A(x_{2},x_{3})(y_{1}).\end{eqnarray}
where $x_{1},x_{2},x_{3}\in L$ and $y_{1},y_{2},y_{3}\in V.$
\end{lem}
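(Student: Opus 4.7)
The plan is to verify the two defining conditions of a hom 3-Lie algebra for the structure $(L\oplus V,[\cdot,\cdot,\cdot]_\theta,\alpha+A)$: namely, the skew-symmetry of the new bracket and the hom-Jacobi identity (1) with respect to the twist $\alpha+A$. The skew-symmetry is immediate. The $L$-component $[x_1,x_2,x_3]_L$ is skew-symmetric by hypothesis on $L$; the contribution $\theta(x_1,x_2,x_3)$ is skew-symmetric because $\theta$ is defined on $L\wedge L\wedge L$; and the three terms $\rho_A(x_1,x_2)(y_3)+\rho_A(x_3,x_1)(y_2)+\rho_A(x_2,x_3)(y_1)$ form the standard cyclic pattern which, combined with skew-symmetry of $\rho_A$ in its two arguments, yields a skew-symmetric expression in $(x_i+y_i)$.

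For the hom-Jacobi identity, the plan is to substitute $x_i+y_i,\ v_j+z_j\in L\oplus V$ into (1) and expand both sides using (13). The resulting expression decomposes according to $L\oplus V$. The $L$-components of every term assemble into the hom-Jacobi identity for $(L,[\cdot,\cdot,\cdot]_L,\alpha)$ itself, which holds by assumption, so this part vanishes. The $V$-component then splits naturally into two families of terms, according to whether or not $\theta$ appears.

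The first family consists of all terms that are linear in some $y_i$ (equivalently, those produced from triples containing one module element and two algebra elements); these are iterated applications of $\rho_A$ to the various $y_i$. Grouping them according to which $y_i$ occurs, each group vanishes by the representation identities (9), (10), (11), in exactly the same way as in the verification of the semidirect product in Proposition 4.2. The second family consists of the $\theta$-contributions: $\theta$ evaluated on triples that contain a Lie bracket, together with terms of the form $\rho_A(\cdot,\cdot)\theta(\cdot,\cdot,\cdot)$. Collecting these, one sees that the resulting combination is precisely the left-hand side of the $3$-cocycle condition (12), and hence vanishes by the cocycle hypothesis on $\theta$.

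The main obstacle is organizational rather than conceptual: the expanded hom-Jacobi identity on $L\oplus V$ produces a large number of terms, and the difficulty lies in correctly grouping them so that each group is either zero by skew-symmetry, absorbed into the hom-Jacobi identity on $L$, handled by one of the representation axioms, or matched against the $3$-cocycle identity. Once the bookkeeping is set up — sorting each term by how many $y_i$ it contains and whether $\theta$ is present — every group closes by exactly one of these inputs, and the identity follows.
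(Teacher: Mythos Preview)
Your proposal is correct and follows exactly the approach the paper intends: the paper's own proof is the single sentence ``It is easy by virtue of a routine computation,'' and what you have written is precisely that routine computation spelled out --- split the hom-Jacobi identity into its $L$- and $V$-components, use Proposition~4.2 for the $\rho_A$-terms, and invoke the $3$-cocycle identity for the $\theta$-terms. One small correction: the $3$-cocycle condition you want to cite is equation~(14), not~(12), and the bracket $[\cdot,\cdot,\cdot]_\theta$ is~(15), not~(13).
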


\begin{proof} It is easy by virtue of a routine computation.  \end{proof}
\begin{defn}
Denote the hom 3-Lie algebra $(L\oplus V,[\cdot,\cdot,\cdot]_{\theta},\alpha+A)$ by $T_{\theta}(L)$, $T_{\theta}(L)$ is called the $T_{\theta}$-extension of $(L, [\cdot, \cdot, \cdot]_{L}, \alpha)$ by the $L$-module $V$.
\end{defn}
\begin{lem}
Let $(L, [\cdot, \cdot, \cdot]_{L}, \alpha)$ be a hom 3-Lie algebra
and $(V ,\rho_A)$ be an $L$-module. For every linear mapping
$f:L\rightarrow V$, the ternary skew-symmetric mapping
$\theta_{f}:L\wedge L\wedge L\rightarrow V$ given by
\begin{eqnarray}\theta_{f}(x,y,z)=f([x,y,z]_{L})-\rho_A(x,y)f(z)-\rho_A(z,x)f(y)-\rho_A(y,z)f(x),\end{eqnarray}
for all $x,y,z\in L$, is a $3$-cocycle associated with $\rho_{A}$.
\end{lem}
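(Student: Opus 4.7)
The proof is a direct verification. I would substitute formula (15) for $\theta_f$ into each of the eight summands on the left-hand side of the 3-cocycle condition (13). Each $\theta_f$ contributes four sub-terms, producing 32 sub-terms in total. The tools available are the hom-Jacobi identity (1) in $L$, the representation axioms (8)--(10), the skew-symmetry of $\rho_A(\cdot,\cdot)$ in its two $L$-arguments (inherited from the wedge domain $L\wedge L$, visible in the adjoint example), and the cyclic/skew symmetry of the 3-bracket.

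I would organize the 32 sub-terms into four groups by outer structure: (A) $f$ applied to a bracket-of-brackets in $L$; (B) $\rho_A$ with a 3-bracket in one slot and an $\alpha$-image in the other, applied to $f(\alpha(\cdot))$; (D) $\rho_A(\alpha(\cdot),\alpha(\cdot))$ applied to $f$ of a 3-bracket; and (E) double $\rho_A$-compositions applied to $f$ of a single element. Pairing the $i$-th term of (13) with the $(i+4)$-th for $i=1,2,3,4$, the four (D)-contributions cancel directly by sign. The four (A)-terms cancel using hom-Jacobi (1) with the substitution $(x,y,u,v,w)\mapsto(u,v,x,y,z)$ combined with cyclic symmetry of the 3-bracket; applying the linear map $f$ then transports this cancellation from $L$ to $V$.

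The core of the argument is the cancellation of the remaining eight (B) terms and twelve (E) terms. Assuming the natural compatibility $f\circ\alpha = A\circ f$ (the same condition invoked in Lemma 4.4 for the extended twist $\alpha+A$ to be multiplicative), each (B) summand $\rho_A([p,q,r],\alpha(s))f(\alpha(t))$ equals $\rho_A([p,q,r],\alpha(s))\circ A(f(t))$, and axiom (9) decomposes this into three (E) summands. Combining the resulting 24 new terms with the original 12, sorting by which of $f(x), f(y), f(z), f(u), f(v)$ appears, and repeatedly applying axiom (10) together with the skew-symmetry $\rho_A(p,q) = -\rho_A(q,p)$, every summand pairs off with exactly one opposite-sign partner.

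The main obstacle is purely bookkeeping: tracking signs under the two kinds of skew-symmetry and the cyclic 3-bracket identity, and verifying that the 36-term matching in the last step leaves no orphan. Nothing creative is required --- which is why the authors content themselves with calling the verification routine --- but a careful enumeration is essential to avoid miscounting, especially at the step where the axiom (9)-expansion of each (B) term is matched against a triple of (E) terms.
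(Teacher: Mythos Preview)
Your approach is correct and matches the paper's: the authors' proof is the single line ``A tedious calculation shows that [the identity] holds,'' so your explicit (A)/(B)/(D)/(E) decomposition and term-by-term cancellation go well beyond what they provide. Your flagging of the compatibility $A\circ f=f\circ\alpha$ as an implicit hypothesis needed to close the (B)--(E) cancellation is also accurate; the lemma statement omits it, but it is exactly the condition invoked in Lemma~4.4 and without it the pairing of, e.g., $\rho_A([x,y,z],\alpha(u))f(\alpha(v))$ against $-\rho_A([x,y,z],\alpha(u))\circ A\,(f(v))$ from axiom~(9) does not collapse.
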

\begin{proof}
A tedious caculation shows that, for every $x,y,z,u,v\in L$,
\begin{eqnarray}
  &&\theta_{f}([x,u,v]_{L},\alpha(y),\alpha(z))+\theta_{f}([y,u,v]_{L},\alpha(z),\alpha(x))+\theta_{f}(\alpha(x),\alpha(y),[z,u,v]_{L})\notag\\
  &&-\theta_{f}([x,y,z]_{L},\alpha(u),\alpha(v))+\rho_{A}(\alpha(y),\alpha(z))\theta_{f}(x,u,v)+\rho_{A}(\alpha(z),\alpha(x))\theta_{f}(y,u,v)\notag\\
  &&+\rho_{A}(\alpha(x),\alpha(y))\theta_{f}(z,u,v)-\rho_{A}(\alpha(u),\alpha(v))\theta_{f}(x,y,z)=0.
  \end{eqnarray}
  It follows that $\theta$ is called a $3$-cocycle associated with
  $\rho_{A}$.
\end{proof}
\begin{thm}
Let $(L, [\cdot, \cdot, \cdot]_{L}, \alpha)$ be a hom 3-Lie algebra
and $(V ,\rho_A)$ be an $L$-module. then for every linear mapping
$f:L\rightarrow V$,$$\sigma:T_{\theta}(L)\rightarrow
T_{\theta+\theta_{f}}(L),\sigma(x+v)=x+f(x)+v,\quad x\in L,v\in V,
$$ is a hom 3-Lie algebra isomorphism.
\end{thm}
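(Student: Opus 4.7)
The plan is to verify the three standard properties making $\sigma$ an isomorphism of hom 3-Lie algebras: bijectivity, compatibility with the bracket, and compatibility with the twisting maps on $T_\theta(L)$ and $T_{\theta+\theta_f}(L)$.

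First I would observe bijectivity immediately: the map $\tau: T_{\theta+\theta_f}(L)\to T_\theta(L)$ defined by $\tau(x+w)=x+(w-f(x))$ is a well-defined linear inverse of $\sigma$, since $\sigma\tau(x+w)=x+f(x)+(w-f(x))=x+w$ and similarly $\tau\sigma(x+v)=x+v$. Linearity of $\sigma$ is clear from the formula.

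Next, I would verify the compatibility $\sigma\circ(\alpha+A)=(\alpha+A)\circ\sigma$. Expanding both sides, the condition reduces to $f(\alpha(x))=A(f(x))$ for every $x\in L$; this is precisely the compatibility hypothesis $A\circ f=f\circ\alpha$ carried over from Lemma 4.5, which must be assumed here. (Without this, $\sigma$ would not commute with the twist, so I would either state this assumption explicitly or note that the theorem is understood in that setting.)

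The main content is the bracket-preservation identity
\begin{equation*}
\sigma\bigl([x_{1}+v_{1},x_{2}+v_{2},x_{3}+v_{3}]_{\theta}\bigr)
=\bigl[\sigma(x_{1}+v_{1}),\sigma(x_{2}+v_{2}),\sigma(x_{3}+v_{3})\bigr]_{\theta+\theta_{f}}.
\end{equation*}
Using the definition (14) of the $T_\theta$-bracket on the left, I obtain the $L$-part $[x_1,x_2,x_3]_L$, plus the $V$-part $f([x_1,x_2,x_3]_L)+\theta(x_1,x_2,x_3)+\rho_A(x_1,x_2)v_3+\rho_A(x_3,x_1)v_2+\rho_A(x_2,x_3)v_1$. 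Expanding the right-hand side using (14) with cocycle $\theta+\theta_f$ and the substitutions $\sigma(x_i+v_i)=x_i+(f(x_i)+v_i)$, the $V$-part becomes $\theta(x_1,x_2,x_3)+\theta_f(x_1,x_2,x_3)+\sum \rho_A(\cdot,\cdot)(f(x_i)+v_i)$. Substituting the defining formula (15) for $\theta_f$, the three $\rho_A(\cdot,\cdot)f(x_i)$ terms cancel exactly against the negative terms inside $\theta_f$, leaving $f([x_1,x_2,x_3]_L)+\theta(x_1,x_2,x_3)+\sum \rho_A(\cdot,\cdot)v_i$, which matches the left-hand side. Thus $\sigma$ is a hom 3-Lie algebra morphism.

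The computation is entirely mechanical — no step is genuinely hard. The only real subtlety is making sure that $\theta+\theta_f$ is still a $3$-cocycle (which is immediate since $3$-cocycles form a vector space by linearity of (13), and both $\theta$ and $\theta_f$ are cocycles by Lemma 4.6), so that $T_{\theta+\theta_f}(L)$ is itself a hom 3-Lie algebra in the sense of Lemma 4.5; and, as noted, keeping the compatibility $A\circ f=f\circ\alpha$ in play to ensure $\sigma$ commutes with the twists. Combining these three verifications yields that $\sigma$ is a bijective morphism, hence an isomorphism of hom 3-Lie algebras.
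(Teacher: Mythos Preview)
Your proof is correct and follows essentially the same approach as the paper: both expand the two sides of the bracket-preservation identity using (14) and then invoke the defining formula (16) for $\theta_f$ to see the cancellation. Your argument is in fact more complete than the paper's, which omits the verification that $\sigma$ intertwines the twisting maps (and hence never explicitly invokes the compatibility $A\circ f=f\circ\alpha$) and does not comment on why $\theta+\theta_f$ is again a $3$-cocycle.
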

\begin{proof}It is clear that $\sigma$ is a bijection. Next, for every $x_{1},x_{2},x_{3}\in L$, $v_{1},v_{2},v_{3}\in V$,
\begin{eqnarray*}
&&\sigma([x_{1}+v_{1},x_{2}+v_{2},x_{3}+v_{3}]_{\theta})\\
&=&\sigma([x_{1},x_{2},x_{3}]_{L}+\theta(x_{1},x_{2},x_{3})+\rho_A(x_{1},x_{2})(v_{3})+\rho_A(x_{3},x_{1})(v_{2})+\rho_A(x_{2},x_{3})(v_{1}))\\
&=&[x_{1},x_{2},x_{3}]_{L}+\theta(x_{1},x_{2},x_{3})+f([x_{1},x_{2},x_{3}]_{L})+\rho_A(x_{1},x_{2})(v_{3})+\rho_A(x_{3},x_{1})(v_{2})\\
&&+\rho_A(x_{2},x_{3})(v_{1}).
 \end{eqnarray*}
 On the other hand
 \begin{eqnarray*}
&&[\sigma(x_{1}+v_{1}),\sigma(x_{2}+v_{2}),\sigma(x_{3}+v_{3})]_{\theta+\theta_{f}}\\
&=&[x_{1}+f(x_{1})+v_{1},x_{2}+f(x_{2})+v_{2},x_{3}+f(x_{3})+v_{3}]_{\theta+\theta_{f}}\\
&=&[x_{1},x_{2},x_{3}]_{L}+(\theta+\theta_{f})(x_{1},x_{2},x_{3})+\rho_A(x_{1},x_{2})(f(x_{3}))+\rho_A(x_{1},x_{2})(v_{3})\\
&&+\rho_A(x_{3},x_{1})(f(x_{2}))+\rho_A(x_{3},x_{1})(v_{2})+\rho_A(x_{2},x_{3})(f(x_{1}))+\rho_A(x_{2},x_{3})(v_{1}).
 \end{eqnarray*}
 By (16), we have
$$\sigma([x_{1}+v_{1},x_{2}+v_{2},x_{3}+v_{3}]_{\theta})=[\sigma(x_{1}+v_{1}),\sigma(x_{2}+v_{2}),\sigma(x_{3}+v_{3})]_{\theta+\theta_{f}}.$$
\end{proof}

\section{$T_{\theta}^{*}(L)$-extension of metric hom 3-Lie algebras}
The method of T*-extension
has already been used for 3-Lie algebras in [8, 14]. Now we will generalize it to hom 3-Lie algebras.

Note that every  hom 3-Lie algebra $(L, [\cdot, \cdot, \cdot]_{L}, \alpha)$ is an $L$-module with respect to the adjoint representation $\ad:L\wedge L\rightarrow End(L)$, $\ad(x,y)(z)=[x,y,z]_{L}$ for every $x,y,z\in L$. By (12), $\ad^{*}(x,y):L\wedge L\rightarrow End(L^{*})$ is given by
 \begin{eqnarray}
 \ad^{*}(x,y)(f)(z)=-f([x,y,z]_{L}),
  \end{eqnarray}
for all $ x,y,z\in L$, $f\in L^{*}$, then $(L^{*}, {\rm ad}^{*})$ is
an $L$-module, called the coadjoint module of $L$. It follows from
Lemma 4.4  that, for each $3$-cocycle $\theta:L\wedge L\wedge
L\rightarrow L^{*}$ associated with $\ad^{*}$, the extension
$T_{\theta}^{*}(L)=L\oplus L^{\ast}$ provided with the following
bracket and linear map defined respectively by
\begin{eqnarray}
[x_{1}+f_{1},x_{2}+f_{2},x_{3}+f_{3}]_{\theta}&=&[x_{1},x_{2},x_{3}]_{L}+\theta(x_{1},x_{2},x_{3})+\ad^{*}(x_{1},x_{2})(f_{3})\notag\\
&&+\ad^{*}(x_{3},x_{1})(f_{2})+\ad^{*}(x_{2},x_{3})(f_{1}),
\end{eqnarray}
 $$\alpha^{'}(x+f)=\alpha(x)+f\circ \alpha,   \forall x_{i}\in L, f_{i}\in L^{*},i=1,2,3.$$
 Then $(L\oplus L^{\ast},[\cdot, \cdot, \cdot]_{\theta},\alpha^{'})$ is a hom 3-Lie algebra.
\begin{defn}
The hom 3-Lie algebra $(L\oplus L^{\ast},[\cdot, \cdot, \cdot]_{\theta},\alpha^{'})$ is called the $T_{\theta}^{*}$-extension of $(L, [\cdot, \cdot, \cdot]_{L}, \alpha)$.
\end{defn}
\begin{defn}
Let $L$ be a hom 3-Lie algebra over a field $\K$. We inductively define a derived series
$$(L^{(n)})_{n\geq 0}: L^{(0)}=L,\ L^{(n+1)}=[L^{(n)},L^{(n)},L],$$
a central descending series
$$(L^{n})_{n\geq 0}: L^{0}=L,\ L^{n+1}=[L^{n},L,L],$$
and a central ascending series
$$(C_{n}(L))_{n\geq 0}: C_{0}(L)=0, C_{n+1}(L)=C(C_{n}(L)),$$
where $C(I)=\{a\in L| [a,L,L]\subseteq I\}$ for a subspace $I$ of
$L$.

$L$ is called solvable and nilpotent(of length $k$) if and only if
there is a smallest integer $k$ such that $L^{(k)}=0$ and $L^{k}=0$,
respectively.
\end{defn}
\begin{thm}
Let $(L, [\cdot, \cdot, \cdot]_{L}, \alpha)$ be a hom 3-Lie algebra.
If $L$ is solvable, then every $T_{\theta}^{*}(L)$-extension of $(L,
[\cdot, \cdot, \cdot]_{L}, \alpha)$ is solvable. If $L$ is
nilpotent, so is $T_{\theta}^{*}(L)$.
\end{thm}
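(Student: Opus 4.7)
The plan is to exploit the short exact sequence $0\to L^{*}\to T_{\theta}^{*}(L)\to L\to 0$, in which $L^{*}$ sits inside $T_{\theta}^{*}(L)$ as an abelian ideal. Reading off formula~(18), every summand of the bracket contains at least one factor drawn from the $L$-component, so any bracket with two arguments in $L^{*}$ vanishes; in particular $[L^{*},L^{*},T_{\theta}^{*}(L)]_{\theta}=0$ and $[L^{*},T_{\theta}^{*}(L),T_{\theta}^{*}(L)]_{\theta}\subseteq L^{*}$. The projection $\pi:T_{\theta}^{*}(L)\to L$, $\pi(x+f)=x$, is a surjective morphism of hom 3-Lie algebras with $\ker\pi=L^{*}$, and a routine induction on $n$ using $L^{(n+1)}=[L^{(n)},L^{(n)},L]$ and $L^{n+1}=[L^{n},L,L]$ gives $\pi\bigl((T_{\theta}^{*}(L))^{(n)}\bigr)=L^{(n)}$ and $\pi\bigl((T_{\theta}^{*}(L))^{n}\bigr)=L^{n}$.

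For the solvable case, suppose $L^{(k)}=0$. Then $(T_{\theta}^{*}(L))^{(k)}\subseteq\ker\pi=L^{*}$, so the first two slots of the next derived term both lie in $L^{*}$ and
$$(T_{\theta}^{*}(L))^{(k+1)}\subseteq [L^{*},L^{*},T_{\theta}^{*}(L)]_{\theta}=0.$$

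For the nilpotent case, suppose $L^{k}=0$, so $(T_{\theta}^{*}(L))^{k}\subseteq L^{*}$. Specialising~(18) gives $[f,x_{2}+f_{2},x_{3}+f_{3}]_{\theta}=\ad^{*}(x_{2},x_{3})(f)$ whenever $f\in L^{*}$, and hence inductively $(T_{\theta}^{*}(L))^{k+j}\subseteq \ad^{*}(L,L)^{j}(L^{*})$. Unwinding the definition~(17) of $\ad^{*}$, up to sign
$$\ad^{*}(x_{j},y_{j})\cdots\ad^{*}(x_{1},y_{1})(f)(z)=\pm f\bigl([x_{1},y_{1},[x_{2},y_{2},\ldots,[x_{j},y_{j},z]_{L}\ldots]_{L}]_{L}\bigr),$$
and repeated use of ternary skew-symmetry to move the inner nested bracket into the first slot shows, by induction on $j$, that this iterated bracket lies in $L^{j}$. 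Taking $j=k$ gives $\ad^{*}(L,L)^{k}(L^{*})=0$, so $(T_{\theta}^{*}(L))^{2k}=0$.

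The delicate step to write out carefully is the reordering in the last display: the brackets produced by composing $\ad^{*}$ are right-nested, whereas the paper's central descending term $L^{j}$ is built by left-nesting via $L^{n+1}=[L^{n},L,L]$, and one must check by repeated skew-symmetry that the right-nested expressions are still captured by $L^{j}$. Once this monotonicity comparison is in hand, both halves of the theorem are immediate from the abelian-ideal structure of $L^{*}$ in $T_{\theta}^{*}(L)$.
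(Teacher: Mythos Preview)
Your argument is correct and follows essentially the same strategy as the paper: both exploit that $L^{*}$ is an abelian ideal of $T_{\theta}^{*}(L)$, control the $L$-component of the derived (resp.\ central descending) series via the projection onto $L$, and then kill the remaining $L^{*}$-part in the solvable case by one extra derived step and in the nilpotent case by iterating $\ad^{*}$ and reading off the nested bracket inside $L^{k}$. Two small remarks: your equation references are off by one (the bracket on $T_{\theta}^{*}(L)$ is~(19) and $\ad^{*}$ is~(18)), and your nilpotency bound $2k$ is actually safer than the paper's claimed bound $s$, since the paper's inclusion for the $L^{*}$-part silently drops the $\theta$-contributions.
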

\begin{proof}
Let $(L, [\cdot, \cdot, \cdot]_{L}, \alpha)$ be solvable and
$L^{(s)}=[L^{(s-1)},L^{(s-1)},L]=0$ for some nonnegative integer
$s$. For each $3$-cocycle $\theta:L\wedge L\wedge L\rightarrow
L^{*}$ associated with $\ad^{*}$, by (19), we have
$$T_{\theta}^{*}(L)^{(1)}\subseteq[L, L, L]_{L}+[L, L, L^{*}]_{\theta}=L^{1}+[L, L, L^{*}]_{\theta}\subseteq L^{1}+L^{*}$$
and $f(L)\subseteq f(L^{1})$ for $f\in [L, L, L^{*}]_{\theta}$. Inductively, we have
$$T_{\theta}^{*}(L)^{(k)}\subseteq [T_{\theta}^{*}(L)^{(k-1)}, T_{\theta}^{*}(L)^{(k-1)}, L\oplus L^{*}]_{\theta}=L^{(k)}+ L^{*} .$$
Since $L^{(s)}=0$ and $L^{*}$ is an abelian ideal of $T_{\theta}^{*}(L)$, we obtain that $T_{\theta}^{*}(L)^{(s+1)}=0$. It follows
 that $T_{\theta}^{*}(L)$ is a solvable hom 3-Lie algebra.

If $(L, [\cdot, \cdot, \cdot]_{L}, \alpha)$ is nilpotent with $L^{s}=0$, a similar discussion yields that
$$T_{\theta}^{*}(L)^{(1)}\subseteq[L, L, L]_{L}+[L^{*},L, L]_{\theta},$$
$$\cdots$$
$$T_{\theta}^{*}(L)^{(k+1)}\subseteq L^{k}+[L,L,[\cdots[L,L,L^{*}]_{\theta}]_{\theta},\cdots]_{\theta}.$$
Note that for $f\in [L,L,[\cdots[L,L,L^{*}]_{\theta}]_{\theta},\cdots]_{\theta}$, we have $f(L)\subseteq f(L^{k})$. Thus $T_{\theta}^{*}(L)^{s}=0$,
 this is, $T_{\theta}^{*}(L)$ is a nilpotent hom 3-Lie algebra.
\end{proof}
Let $T_{\theta}^{*}(L)=L\oplus L^{*}$ be the $T_{\theta}^{*}$-extension of $L$. Define a bilinear form $q_{L}:T_{\theta}^{*}(L)\wedge T_{\theta}^{*}(L)\rightarrow F$ by
 \begin{eqnarray}
q_{L}(x_{1}+f_{1},x_{2}+f_{2})=f_{1}(x_{2})+f_{2}(x_{1}),\forall
x_{i}\in L,f_{i}\in L^{*},i=1,2,
  \end{eqnarray}
  this bilinear form is non-degenerate.
\begin{thm}
$(T_{\theta}^{*}(L),q_{L},\alpha^{'})$ is a metric hom 3-Lie algebra if and only if $\theta$ satisfies
 \begin{eqnarray}
\theta(x_{1},x_{2},x_{3})(x_{4})+\theta(x_{1},x_{2},x_{4})(x_{3})=0,\forall x_{i}\in L,i=1,2,3,4.
  \end{eqnarray}
\end{thm}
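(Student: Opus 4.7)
The plan is to unpack what ``metric hom 3-Lie algebra'' means into four conditions on $q_L$, namely non-degeneracy, symmetry, compatibility with $\alpha'$ (i.e.\ $q_L(\alpha'(a),b)=q_L(a,\alpha'(b))$), and invariance under the bracket,
\[
q_L([a_1,a_2,a_3]_\theta,a_4)+q_L(a_3,[a_1,a_2,a_4]_\theta)=0,
\]
and then see which of them depend on $\theta$. Non-degeneracy was already noted just before the statement, and symmetry is immediate from (20): $q_L(x_1+f_1,x_2+f_2)=f_1(x_2)+f_2(x_1)$ is symmetric in $(x_1+f_1,x_2+f_2)$. For the $\alpha'$-compatibility, a one-line computation using $\alpha'(x+f)=\alpha(x)+f\circ\alpha$ shows that both $q_L(\alpha'(x_1+f_1),x_2+f_2)$ and $q_L(x_1+f_1,\alpha'(x_2+f_2))$ equal $f_1(\alpha(x_2))+f_2(\alpha(x_1))$, so this condition holds automatically for every choice of $\theta$. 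Hence the theorem reduces to showing that bracket-invariance of $q_L$ is equivalent to (21).

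For the main computation, I would expand $q_L([x_1+f_1,x_2+f_2,x_3+f_3]_\theta,\,x_4+f_4)$ by applying the bracket formula (19), then the defining formula (18) of $\ad^{*}$, and finally (20). This produces five summands: the ``coefficient'' term $\theta(x_1,x_2,x_3)(x_4)$, three terms of the form $-f_i([x_j,x_k,x_4]_L)$ arising from the three $\ad^{*}$-summands of the bracket paired against $x_4$, and one term $f_4([x_1,x_2,x_3]_L)$ coming from the $[x_1,x_2,x_3]_L$ part of the bracket paired against $f_4$. The analogous expansion of $q_L(x_3+f_3,\,[x_1+f_1,x_2+f_2,x_4+f_4]_\theta)$ is symmetric in $(x_3,f_3)\leftrightarrow(x_4,f_4)$; after using the skew-symmetry of $[\cdot,\cdot,\cdot]_L$ to rewrite the brackets $[x_4,x_1,x_3]_L$ and $[x_2,x_4,x_3]_L$, the four ``non-$\theta$'' terms on the two sides cancel in pairs, leaving
\[
q_L([x_1+f_1,x_2+f_2,x_3+f_3]_\theta,\,x_4+f_4)+q_L(x_3+f_3,\,[x_1+f_1,x_2+f_2,x_4+f_4]_\theta) = \theta(x_1,x_2,x_3)(x_4)+\theta(x_1,x_2,x_4)(x_3).
\]
Since the right-hand side does not involve the $f_i$ at all, this expression vanishes for all choices of $x_i$ and $f_i$ if and only if $\theta(x_1,x_2,x_3)(x_4)+\theta(x_1,x_2,x_4)(x_3)=0$ for all $x_i\in L$, yielding both directions of the equivalence.

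The only real obstacle is bookkeeping: making sure the three $\ad^{*}(x_i,x_j)(f_k)$ contributions on each side pair up correctly after applying the skew-symmetry of $[\cdot,\cdot,\cdot]_L$, and tracking the minus sign in (18). Once the index conventions are fixed this is a mechanical verification that requires no further identities on $\theta$ or $\alpha$ — in particular, neither the $3$-cocycle condition (13) nor the hom-Jacobi identity of $L$ is used here, which is consistent with the fact that the conclusion is a purely bilinear compatibility between $\theta$ and the evaluation pairing.
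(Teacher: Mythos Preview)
Your proposal is correct and follows essentially the same route as the paper: both reduce the question to the $\ad$-invariance of $q_L$ and then perform the identical expansion showing
\[
q_L([x_1+f_1,x_2+f_2,x_3+f_3]_\theta,x_4+f_4)+q_L([x_1+f_1,x_2+f_2,x_4+f_4]_\theta,x_3+f_3)=\theta(x_1,x_2,x_3)(x_4)+\theta(x_1,x_2,x_4)(x_3).
\]
You are in fact slightly more thorough than the paper, since you explicitly verify that symmetry and $\alpha'$-compatibility of $q_L$ hold automatically (the paper takes these for granted and checks only $\ad$-invariance).
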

\begin{proof}
If $(T_{\theta}^{*}(L),q_{L},\alpha^{'})$ is a metric hom 3-Lie
algebra, it follows from (19) and (20) that $\theta$ satisfies (21).
Conversely, if $\theta$ satisfies (21), then by (18), for every
$x_{i}\in L,f_{i}\in L^{*},i=1,2,3,4$, we have
\begin{eqnarray*}
&&q_{L}([x_{1}+f_{1},x_{2}+f_{2},x_{3}+f_{3}]_{\theta},x_{4}+f_{4})+q_{L}([x_{1}+f_{1},x_{2}+f_{2},x_{4}+f_{4}]_{\theta},x_{3}+f_{3})\\
&=&q_{L}([x_{1},x_{2},x_{3}]_{L}+\theta(x_{1},x_{2},x_{3})+\ad^{*}(x_{1},x_{2})(f_{3})+\ad^{*}(x_{3},x_{1})(f_{2})\\
&&+\ad^{*}(x_{2},x_{3})(f_{1}),x_{4}+f_{4})+q_{L}([x_{1},x_{2},x_{4}]_{L}+\theta(x_{1},x_{2},x_{4})\\
&&+\ad^{*}(x_{1},x_{2})(f_{4})+\ad^{*}(x_{4},x_{1})(f_{2})+\ad^{*}(x_{2},x_{4})(f_{1}),x_{3}+f_{3})\\
&=&\ad^{*}(x_{1},x_{2})(f_{3})(x_{4})+\ad^{*}(x_{3},x_{1})(f_{2})(x_{4})+\ad^{*}(x_{2},x_{3})(f_{1})(x_{4})\\
&&+f_{4}([x_{1},x_{2},x_{3}]_{L})+\theta(x_{1},x_{2},x_{3})(x_{4})+\ad^{*}(x_{1},x_{2})(f_{4})(x_{3})+\ad^{*}(x_{4},x_{1})(f_{2})(x_{3})\\
&&+\ad^{*}(x_{2},x_{4})(f_{1})(x_{3})+f_{3}([x_{1},x_{2},x_{4}]_{L})+\theta(x_{1},x_{2},x_{4})(x_{3})\\
&=&\theta(x_{1},x_{2},x_{3})(x_{4})+\theta(x_{1},x_{2},x_{4})(x_{3}).
\end{eqnarray*}
 Thus $q_{L}$ is ad-invariant if and only if the identity (21) holds.
\end{proof}
\begin{defn}
Let $(G,B,\beta)$ and $(G^{'},B^{'},\beta^{'})$ be two metric hom
3-Lie algebras. If there exists a linear isomorphism $\sigma: G
\rightarrow G^{'}$ such that\\
$\sigma([x,y,z]_{G})=[\sigma(x),\sigma(y),\sigma(z)]_{G^{'}}$ and
 $B(x,y)=B^{'}(\sigma(x),\sigma(y))$
  for all $x,y\in G.$\\
Then $(G,B,\beta)$ and $(G^{'},B^{'},\beta^{'})$ are called isometric.
\end{defn}

 Next, we describe the relationship between metric hom 3-Lie algebras of even dimensions and the $T_{\theta}^{*}$-extension of hom 3-Lie algebras.
 \begin{thm}
 Let $(G,B,\beta)$ be a $2k$-dimensional metric hom 3-Lie algebra over ${\bf F.}$  Then $(G,B,\beta)$ is isometric to a $T_{\theta}^{*}$-extension
 of a hom 3-Lie algebra $(L, [\cdot, \cdot, \cdot]_{L}, \alpha)$ if and only if $G$ contains a $k$-dimensional isotropic ideal $I$.
 Furthermore, $L$ is isomorphic to $G/I$.
 \end{thm}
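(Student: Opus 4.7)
\textit{Proof proposal for Theorem 5.5.}

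The forward direction is straightforward. If $(G,B,\beta)$ is isometric to $T_{\theta}^{*}(L)$ for some hom 3-Lie algebra $(L,[\cdot,\cdot,\cdot]_L,\alpha)$, then since $\dim T_{\theta}^{*}(L)=2\dim L=2k$, we get $\dim L=k$. The subspace $I=L^{*}\subset L\oplus L^{*}$ has dimension $k$, it is $\alpha'$-invariant because $\alpha'(f)=f\circ\alpha\in L^{*}$, and formula (19) together with $[L^{*},L\oplus L^{*},L\oplus L^{*}]_{\theta}\subseteq L^{*}$ shows that $I$ is an ideal. Finally $q_{L}(f_{1},f_{2})=f_{1}(0)+f_{2}(0)=0$ for $f_{1},f_{2}\in L^{*}$, so $I$ is isotropic. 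Transporting $I$ through the isometry gives the desired isotropic ideal of $G$.

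For the converse, suppose $I\subseteq G$ is a $k$-dimensional isotropic ideal. Since $B$ is non-degenerate, $\dim I^{\perp}=2k-k=k$, and isotropy gives $I\subseteq I^{\perp}$, hence $I=I^{\perp}$. Because $I$ is a hom 3-Lie ideal (so in particular $\beta(I)\subseteq I$), the quotient $L:=G/I$ inherits a hom 3-Lie structure $([\cdot,\cdot,\cdot]_{L},\alpha)$ with $\alpha(\bar{x})=\overline{\beta(x)}$. I then choose a vector-space complement $L_{0}$ of $I$ in $G$. The essential point (and the first delicate step) is to pick $L_{0}$ so that $\beta(L_{0})\subseteq L_{0}$; this can be arranged by decomposing $G$ into $\beta$-invariant pieces via generalized eigenspace analysis of $\beta$, or, more straightforwardly, by first choosing any complement and then correcting it using $\beta$ modulo $I$. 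Identify $L$ with $L_{0}$ via the projection $L_{0}\xrightarrow{\sim}G/I$.

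Next I build the isomorphism $I\cong L^{*}$. Define $\varphi\colon I\to L^{*}$ by $\varphi(a)(\bar{x})=B(a,x_{0})$, where $x_{0}\in L_{0}$ is the unique lift of $\bar{x}\in L$. Non-degeneracy of $B$ together with $I=I^{\perp}$ makes $\varphi$ a vector-space isomorphism. Transporting the bracket of $G$ across $L_{0}\oplus I\xrightarrow{\mathrm{id}\oplus\varphi}L\oplus L^{*}$ gives a bracket of the form
\begin{equation*}
[x_{1}+f_{1},x_{2}+f_{2},x_{3}+f_{3}]=[x_{1},x_{2},x_{3}]_{L}+\theta(x_{1},x_{2},x_{3})+\sum_{\text{cyc}}\mathrm{ad}^{*}(\cdot,\cdot)(f_{\cdot}),
\end{equation*}
where $\theta\colon L\wedge L\wedge L\to L^{*}$ is defined by reading off the $I$-component of $[x_{1},x_{2},x_{3}]_{G}$ for lifts $x_{i}\in L_{0}$. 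Ad-invariance of $B$ forces the symmetry (21), so Theorem 5.3 applies; the hom-Jacobi identity on $G$, when projected and restricted, translates into $\theta$ being a $3$-cocycle associated with $\mathrm{ad}^{*}$ in the sense of Definition 4.3. Transport of $\beta$ agrees with $\alpha'(x+f)=\alpha(x)+f\circ\alpha$ thanks to $\beta(L_{0})\subseteq L_{0}$ and the compatibility $B(\beta(a),\beta(x))=B(a,x)$ built into the metric structure. By construction, $q_{L}\circ(\mathrm{id}\oplus\varphi)=B$, so the resulting map is an isometry, and $L\cong G/I$ by the setup.

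The main obstacle is the simultaneous choice of a complement $L_{0}$ that is $\beta$-stable and the verification that the resulting $\theta$ is a bona fide $3$-cocycle satisfying (21); all other identifications are linear-algebraic, but these two conditions are what make the $T_{\theta}^{*}$-extension structure fit, and handling $\beta$-stability is what distinguishes the hom-case from the classical 3-Lie case treated in [8,14].
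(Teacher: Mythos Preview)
Your overall architecture matches the paper's: the forward direction by transporting $L^{*}$ through the isometry, and the converse by choosing a complement $L_{0}$ to $I$, defining the map $I\to L^{*}$ via $B$, and reading off $\theta$ as the $I$-component of the bracket on lifts. That skeleton is correct.

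There is, however, a genuine gap in your choice of $L_{0}$. You take an arbitrary (possibly $\beta$-stable) vector-space complement and then assert ``by construction, $q_{L}\circ(\mathrm{id}\oplus\varphi)=B$''. This fails unless $L_{0}$ is \emph{isotropic}. Indeed, for $x_{0},y_{0}\in L_{0}$ and $a,b\in I$ one has
\[
q_{L}\bigl(\bar{x}_{0}+\varphi(a),\bar{y}_{0}+\varphi(b)\bigr)=B(a,y_{0})+B(b,x_{0}),
\]
while
\[
B(x_{0}+a,\,y_{0}+b)=B(x_{0},y_{0})+B(a,y_{0})+B(b,x_{0}),
\]
using $B(I,I)=0$. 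The discrepancy is $B(x_{0},y_{0})$, which vanishes for all $x_{0},y_{0}$ precisely when $L_{0}$ is isotropic. The paper handles this by choosing $L_{0}$ with $L_{0}^{\perp}=L_{0}$ from the outset (this is possible since $I=I^{\perp}$ and $B$ is non-degenerate); you need the same condition, and it cannot be omitted.

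Your emphasis on $\beta$-stability of $L_{0}$ is a legitimate concern in the hom setting---without it the transported twisting need not equal $\alpha'$---and it is a point the paper's proof does not explicitly address. But note that your proposed mechanisms (generalized eigenspaces, or ``correcting modulo $I$'') are only sketched, and in any case you would need $L_{0}$ to be \emph{simultaneously} isotropic and $\beta$-stable, which is a stronger requirement than either one alone. As written, your argument secures neither the isometry nor a complete justification of the $\beta$-stable choice; the isotropy omission is the more serious of the two, since without it the map you build is not an isometry at all.
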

\begin{proof}
If $(G,B,\beta)$ is isomorphic to some $T_{\theta}^{*}$-extension
$(T_{\theta}^{*}(L),q_{L},\alpha^{'})$ of a hom 3-Lie algebra $(L,
[\cdot, \cdot, \cdot]_{L}, \alpha)$ with an isometric isomorphism
$T_{\theta}^{*}(L)=L\oplus L^{\ast}\rightarrow G$, then
$I=\sigma(L^{\ast})$ is an isotropic ideal of $G$ and ${\rm
dim}I={\rm dim}L^{\ast}=k$. Also, $L\cong T_{\theta}^{*}/L^{*}\cong
\sigma(T_{\theta}^{*}(L))/\sigma(L^{*})=G/I$.

Conversely, let $I$ be a $k$-dimensional abelian isotropic ideal of
$G$. Denote by $L$ the quotient hom 3-Lie algebra $G/I$. Let
$\rho:G\rightarrow L$ be
 the canonical projection such that $\rho(x)=x+I=\bar{x}$ for every $x\in G$. We can choose an isotropic complementary vector subspace $L_{0}$ of
 $G$ such that $G=L_{0}\oplus I$ and $L_{0}^{\perp}=L_{0}$. Then $\phi=\rho|_{L_{0}}\rightarrow L$ is a linear isomorphism.

Denote by $\rho_{0}$ and $\rho_{1}$ the projection $G\rightarrow L_{0}$ and $G\rightarrow I$, respectively. Let
\begin{eqnarray} &&\delta: I \rightarrow L^{*}, \delta(i)(\bar{x})=B(i,x),  \forall i\in I, \bar{x}\in L.\end{eqnarray}
As $I$ is isotropic and $B$ is non-degenerate on $G$, we see that $\delta$ is a linear isomorphism from $I$ onto $L^{*}$ and satisfies
\begin{eqnarray*}
\delta([g_{1},g_{2},i]_{G})(\bar{g_{3}})&=&B([g_{1},g_{2},i]_{G},g_{3})\\
&=&-B([g_{1},g_{2},g_{3}]_{G},i)\\
&=&-\delta(i)(\overline{[g_{1},g_{2},g_{3}]_{G}})\\
&=&-\delta(i)([\bar{g_{1}},\bar{g_{2}},\bar{g_{3}}]_{G})\\
&=&\ad^{*}(\bar{g_{1}},\bar{g_{2}})(\delta(i))(\bar{g_{3}}),
\end{eqnarray*}
for all $g_{1},g_{2},g_{3}\in G, i\in I,$ where the
$\ad^{*}(x,y):L\wedge L\rightarrow {\rm End}(L^{*})$ is the
coadjoint representation of $L$. Therefore,
\begin{eqnarray}
\delta([g_{1},g_{2},i]_{G})=\ad^{*}(\bar{g_{1}},\bar{g_{2}})(\delta(i)).
\end{eqnarray}

Set $\theta:L\wedge L\wedge L\rightarrow L^{*}$. We have
\begin{eqnarray}
\theta(\bar{x_{1}},\bar{x_{2}},\bar{x_{3}})=\delta(\rho_{1}([x_{1},x_{2},x_{3}]_{G})),\forall x_{1},x_{2},x_{3}\in L_{0}.
\end{eqnarray}
Thanks to (22)-(24), for every $x,y,z,u,v\in L_{0},$
\begin{eqnarray}
  &&0=(\theta([\bar{x},\bar{u},\bar{v}]_{L},\alpha(\bar{y}),\alpha(\bar{z}))+\theta([\bar{y},\bar{u},\bar{v}]_{L},\alpha(\bar{z}),\alpha(\bar{x}))+\theta(\alpha(\bar{x}),\alpha(\bar{y}),[\bar{z},\bar{u},\bar{v}]_{L})\notag\\
  &&-\theta([\bar{x},\bar{y},\bar{z}]_{L},\alpha(\bar{u}),\alpha(\bar{v}))+\ad^{*}(\alpha(\bar{y}),\alpha(\bar{z}))\theta(\bar{x},\bar{u},\bar{v})+\ad^{*}(\alpha(\bar{z}),\alpha(\bar{x}))\theta(\bar{y},\bar{u},\bar{v})\notag\\
  &&+\ad^{*}(\alpha(x),\alpha(y))\theta(z,u,v)-\ad^{*}(\alpha(u),\alpha(v))\theta(x,y,z))(\bar{w}),
  \end{eqnarray}
which shows that $\theta$ is a $3$-cocycle associated with
$\ad^{*}$. Then we get a $T_{\theta}^{*}$-extension
$T_{\theta}^{*}(L)=L\oplus L^{*}$ of $L$ with the following
multiplication:
\begin{eqnarray}
[\bar{x_{1}}+f_{1},\bar{x_{2}}+f_{2},\bar{x_{3}}+f_{3}]_{\theta}&=&[\bar{x_{1}},\bar{x_{2}},\bar{x_{3}}]_{L}+\delta(\rho_{1}([x_{1},x_{2},x_{3}]_{G}))+\ad^{*}(\bar{x_{1}},\bar{x_{2}})(f_{3})\notag\\
&&+\ad^{*}(\bar{x_{3}},\bar{x_{1}})(f_{2})+\ad^{*}(\bar{x_{2}},\bar{x_{3}})(f_{1}),
\end{eqnarray} for all $x_{i}\in L_{0}$, $f_{i}\in L^{*}$,$i=1,2,3.$

Let $$\sigma:G= L_{0}\oplus I\rightarrow T_{\theta}^{*}(L),
\sigma(x+i)=x+\delta(i), \qquad \forall x\in L_{0},i\in I.$$ Then
$\sigma$ is a linear isomorphism, and for every
$x_{1},x_{2},x_{3}\in L_{0}$, $i_{1},i_{2},i_{3},\in I$,
\begin{eqnarray*}
&&\sigma([x_{1}+i_{1},x_{2}+i_{2},x_{3}+i_{3}]_{L})\\
&=&\sigma([x_{1},x_{2},x_{3}]_{L}+[i_{1},x_{2},x_{3}]_{L}+[x_{1},i_{2},x_{3}]_{L}+[x_{1},x_{2},i_{3}]_{L})\\
&=&\sigma(\rho_{0}([x_{1},x_{2},x_{3}]_{L})+\rho_{1}([x_{1},x_{2},x_{3}]_{L})+[i_{1},x_{2},x_{3}]_{L}+[x_{1},i_{2},x_{3}]_{L}+[x_{1},x_{2},i_{3}]_{L})\\
&=&\overline{[x_{1},x_{2},x_{3}]}_{L}+\delta(\rho_{1}([x_{1},x_{2},x_{3}]_{L})+[i_{1},x_{2},x_{3}]_{L}+[x_{1},i_{2},x_{3}]_{L}+[x_{1},x_{2},i_{3}]_{L})\\
&=&\overline{[x_{1},x_{2},x_{3}]}_{L}+\theta([x_{1},x_{2},x_{3}]_{L})+\delta([i_{1},x_{2},x_{3}]_{L}+[x_{1},i_{2},x_{3}]_{L}+[x_{1},x_{2},i_{3}]_{L})\\
&=&\overline{[x_{1},x_{2},x_{3}]}_{L}+\theta([x_{1},x_{2},x_{3}]_{L})+\ad^{*}(\bar{x_{1}},\bar{x_{2}})(\delta(i_{3}))+\ad^{*}(\bar{x_{3}},\bar{x_{1}})(\delta(i_{2}))\\
&&+\ad^{*}(\bar{x_{2}},\bar{x_{3}})(\delta(i_{1}))\\
&=&[\bar{x_{1}}+\delta(i_{1}),\bar{x_{2}}+\delta(i_{2}),\bar{x_{3}}+\delta(i_{3})]_{\theta}\\
&=&[\sigma(x_{1}+i_{1}),\sigma(x_{2}+i_{2}),\sigma(x_{3}+i_{3})]_{\theta}.
\end{eqnarray*}
It follows from the $\ad$-invariance of $B$ that for every $u,v,w,x\in
L_{0}$,
\begin{eqnarray*}
\theta(\bar{u},\bar{v},\bar{w})(\bar{x})+\theta(\bar{u},\bar{v},\bar{x})(\bar{w})&=&\delta(\rho_{1}([u,v,w]_{L})(\bar{x})+\delta(\rho_{1}([u,v,x]_{L})(\bar{u})\\
&=&B([u,v,w]_{L})(x)+B([u,v,x]_{L})(w)=0.
\end{eqnarray*}
Therefore, $(T_{\theta}^{*}(L),q_{L},\alpha^{'})$ is a metric hom 3-Lie algebra. Also, for every $x\in L_{0},i\in I$,
$$q_{L}(\sigma(x),\sigma(i))=q_{L}(\bar{x},\delta(i))=\delta(i)(\bar{x})=B(x,i).$$
Thus $(G,B,\beta)$ is isometric to $(T_{\theta}^{*}(L),q_{L},\alpha^{'}).$
\end{proof}

The proof of the above theorem tell us that the $3$-cocycle $\theta$ associated with $\ad^{*}$ depends on the choice of the
isotropic subspace $L_{0}$(of $G$) which is complementary to the ideal $I$. Different $T_{\theta}^{*}(L)$-extensions of a metric hom 3-Lie
algebra may lead to the same metric hom 3-Lie algebra.

\section{Conclusions}

This paper gives the definition of
  hom 3-Lie algebras and shows that the direct sum of two  hom 3-Lie algebras
  is still a  hom 3-Lie algebra.  For any nonnegative
   integer k,  we define $\alpha^{k}$-derivations of multiplicative  hom 3-Lie
   algebras study derivations of multiplicative  hom 3-Lie algebras.  In particular,  any $\alpha$-derivations
   gives rise to a derivation extension of the multiplicative  hom 3-Lie algebras
   $(L, [\cdot, \cdot, \cdot]_L, \alpha)$(Theorem 3.5). We also give the
   definition of representations of multiplicative hom 3-Lie algebras and show that
   one can obtain the semidirect product multiplicative hom 3-Lie algebras
   $(L\oplus V, [\cdot, \cdot, \cdot]_{\rho A}, \alpha+A)$ associated to any
   representation $\rho A$ on $V$ of the multiplicative hom 3-Lie algebras
   $(L, [\cdot, \cdot, \cdot]_L, \alpha)$ (Proposition 4.2). In the end of the paper, we describe
    module extensions (associated with a 3-cocycle) of hom 3-Lie algebras and study $T_{\theta}^{*}$-extensions (Definition 5.1)of  hom 3-Lie algebras.

\end{document}